\newtheorem {theorem}{Theorem}[section]
\newtheorem {lemma}{Lemma}[section]
\newtheorem {example}{Example}[section]
\newtheorem {definition}{Definition}[section]
\title{Approximate solutions of interval-valued optimization problems}
\author{NGUYEN VAN TUYEN$^{1}$}
\address{$^1$Department of Mathematics, Hanoi Pedagogical University 2, Xuan Hoa, Phuc Yen, Vinh Phuc, Vietnam}
\email{tuyensp2@yahoo.com; nguyenvantuyen83@hpu2.edu.vn}
\thanks{}
\date{\today}
\keywords{Interval-valued optimization,    Approximate solutions,   Existence theorems,  KKT optimality conditions.}
\subjclass{90C70, 90C25, 90C46, 49J55}
\begin{document}

\maketitle

\begin{abstract}
This paper deals with approximate solutions of an optimization problem with interval-valued objective function. Four types of  approximate solution concepts of the problem are proposed by considering the partial ordering $LU$ on the set of all closed and bounded intervals.  We show that these  solutions exist under very weak conditions. Under suitable constraint qualifications, we derive Karush--Kuhn--Tucker necessary and sufficient  optimality conditions for convex interval-valued optimization problems.
\end{abstract}

\section{Introduction}
\label{intro}
In this paper, we are interested in approximate solutions of the following constrained interval-valued optimization problem: 
\begin{align}\label{problem}\tag{P}
\begin{split}
&\mathrm{min}\, f(x) 
\\
&\text{s. t.}\ \ x\in X:=\{x\in\mathbb{R}^n\,:\, g_j(x)\leq 0, j=1, \ldots, m\}, 
\end{split}
\end{align}
where $f\colon \mathbb{R}^n\to \mathcal{K}_c$ is an interval-valued function defined by $f(x)=[f^L(x), f^U(x)]$, $f^L,$ $f^U\colon\mathbb{R}^n\to \mathbb{R}$  are real-valued functions satisfying $f^L(x)\leq f^U(x)$ for all $x\in\mathbb{R}^n$, $\mathcal{K}_c$ denote the class of all closed and bounded intervals in $\mathbb{R}$, i.e., 
$$\mathcal{K}_c=\{[a^L, a^U]\,:\, a^L, a^U\in\mathbb{R}, a^L\leq a^U\},$$ 
$g_j\colon\mathbb{R}^n\to \mathbb{R}$, $j\in J:=\{1, \ldots, m\}$, are real-valued constraint functions.

The interval-valued optimization problems  recently have received increasing interest in optimization community;  see, e.g., \cite{Chalco-Cano et.al.-13,Ishibuchi-Tanaka-90,Singh-Dar-15,Singh-Dar-Kim-16,Singh-Dar-Kim-19,Tung-2019,Tung-2019b,Wu-07,Wu-09,Wu-09-b} and references therein. The reason for this is that many problems  in decision making, engineering and economics are affected by risk and uncertainty; see, e.g.,   \cite{Ben-Tal-Nemirovski-98,Ben-Tal-Nemirovski-08,Ben-Tal-Nemirovski-09,Moore-1966,Moore-1979,Slowinski-98}. Hence, we usually cannot determine exactly the coefficients of  objective functions in such problems.  If the coefficients of objective functions are taken as closed intervals, we obtain interval-valued optimization problems of the form \eqref{problem}. These problems may provide an alternative choice for considering optimization problems with uncertain or imprecise data.

In interval-valued optimization, it is important to compare intervals by means of interval order relations. There is a variety of interval order relations known in the literature; see, e.g.,   \cite{Alefeld,Chalco-Cano et.al.-13,Ishibuchi-Tanaka-90,Moore-1966,Moore-1979}.  The well know lower-upper ($LU$) interval order relation    and center-width ($CW$) one are introduced by Ishibuchi and  Tanaka \cite{Ishibuchi-Tanaka-90}.     The lower-spread ($LS$) interval order relation  was proposed by Chalco-Cano et. al. \cite{Chalco-Cano et.al.-13}. For these interval order relations, the corresponding solution concepts for the optimization problem with interval-valued objective function are introduced and studied. 

As a mainstream in the study of interval-valued optimization problems, Karush--Kuhn--Tucker (KKT)  optimality conditions for interval-valued optimization problems  have attracted the attention of many researchers; see, e.g.,  \cite{Chalco-Cano et.al.-13,Singh-Dar-15,Singh-Dar-Kim-16,Singh-Dar-Kim-19,Tung-2019,Tung-2019b,TXS-20,Wu-07,Wu-09,Wu-09-b} and  the references therein. However, to the best of our
knowledge, so far there have been no papers investigating optimality
conditions of KKT-type  for approximate solutions of interval-valued optimization problems. It should be noted  that, in general optimization problems, the study of approximate solutions is very
important because, from the computational point of view, numerical algorithms usually generate only approximate solutions because they stop after a finite number of steps. Furthermore, approximate solutions exist under very weak assumptions; see, e.g.,  \cite{Bao-et al,Loridan-84,Son-Tuyen-Wen-19,Tammer,Tammer-Zalinescu}.

In this paper, we focus for the first time on studying the existence and optimality conditions of KKT-type for approximate solutions of interval-valued optimization problems. We first introduce in the next  section  four kinds of approximate solutions with respect to $LU$ interval order relation of \eqref{problem}. Then we show that the new concepts of approximate solutions are closed related to the approximate efficient solutions of multiobjective optimization problems in the sense of Loridan \cite{Loridan-84}. Section \ref{Existence theorems} is devoted to study the existence of proposed approximate solutions.  In Section \ref{KKT conditions}, we establish KKT necessary and sufficient  optimality conditions  for approximate solutions to  convex interval-valued optimization problems of the form \eqref{problem} under suitable constraint qualifications.   
\section{APPROXIMATE SOLUTIONS} \label{Approximate solutions}
We use the following notation and terminology. Fix $n \in {\mathbb{N}}:=\{1, 2, \ldots\}$. The space $\mathbb{R}^n$ is equipped with the usual scalar product and  Euclidean norm.  We denote the nonnegative orthant in $\mathbb{R}^n$ by  $\mathbb{R}^n_+$.  The topological closure, the topological interior  and the convex hull  of a subset  $S$ of $\mathbb{R}^n$ are denoted, respectively, by  $\mathrm{cl}\,{A}$,  $\mathrm{int}\,{A}$  and $\mathrm{conv}\,A$. The conical hull    of $A$ is defined by
\begin{equation*}
\mathrm{cone}\,A:=\{\lambda x\,:\, \lambda\geq 0, x\in \mathrm{conv}\,A\}.
\end{equation*}

Let $A=[a^L, a^U]$ and $B=[b^L, b^U]$ be two intervals in $\mathcal{K}_c$. Then, by definition, we have
\begin{enumerate}[(i)]
	\item $A+B=\{a+b\,:\, a\in A, b\in B\}=[a^L+b^L, a^U+b^U]$;
	\item $A-B=\{a-b\,:\, a\in A, b\in B\}=[a^L-b^U, a^U-b^L]$.
\end{enumerate}
We also see that  
\begin{equation*}kA=\{ka\,:\, a\in A\}=
\begin{cases}
[ka^L, ka^U]\ \ \text{if}\ \ k\geq 0,
\\
[ka^U, ka^L]\ \ \text{if}\ \ k < 0, 
\end{cases}
\end{equation*}
where $k$ is a real number, see \cite{Alefeld,Moore-1966,Moore-1979}  for more details.

Let $A\subset\mathbb{R}^n$ and $B\subset\mathbb{R}^n$. The {\em Hausdorff metric} between $A$ and $B$ is defined by
\begin{equation*}
d_H(A, B):=\max\bigg\{\sup_{a\in A}\inf_{b\in B}\|a-b\|, \sup_{b\in B}\inf_{a\in A}\|a-b\|\bigg\}.
\end{equation*}

Let $\{A_n\}$ and $A$ be closed and bounded intervals in $\mathbb{R}^n$. We say that the sequence $\{A_n\}$ converges to $A$, denoted by $$\lim_{n\to\infty}A_n=A,$$ 
if, for every $\varepsilon>0$, there exists $N\in \mathbb{N}$ such that, for every $n\geq N$, we have $d_H(A_n, A)<\varepsilon$.

We  recall here the definitions of the $LU$ interval order relation in $\mathcal{K}_c$ and the corresponding solution concepts of \eqref{problem}. 
\begin{definition}[{see \cite{Ishibuchi-Tanaka-90,Wu-07}}]{\rm 
		Let $A=[a^L, a^U]$ and $B=[b^L, b^U]$ be two intervals in $\mathcal{K}_c$. We say that:
		\begin{enumerate}[(i)]
			\item $A\preceq_{LU} B$ if $a^L\leq b^L$ and $a^U\leq b^U$.
			
			\item  $A\prec_{LU} B$ if $A\preceq_{LU} B$ and $A\neq B$, or, equivalently, $A\prec_{LU} B$ if
			\\
			$ 
			\begin{cases}
			a^L<b^L
			\\
			a^U\leq b^U,
			\end{cases}
			$ 
			or \ \ \ \
			$ 
			\begin{cases}
			a^L\leq b^L
			\\
			a^U< b^U,
			\end{cases}
			$ 
			or \ \ \ \
			$ 
			\begin{cases}
			a^L< b^L
			\\
			a^U< b^U.
			\end{cases}
			$ 
			\item $A\prec^s_{LU} B$ if $a^L<b^L$ and $a^U<b^U$.
		\end{enumerate}
	}
\end{definition}

\begin{definition}[see \cite{Wu-09}]{\rm  Let $x^*\in X$. We say that
		\begin{enumerate}[(i)]
			\item $x^*$ is an ${LU}$-solution of \eqref{problem}, if there is no $x\in X$ such that 
			$$f(x)\prec_{LU}f(x^*).$$

			\item $x^*$ is a weakly $LU$-solution of \eqref{problem}, if there is no $x\in X$ such that 
			$$f(x)\prec^s_{LU}f(x^*).$$ 
		\end{enumerate}
	}
\end{definition} 

The set of weakly $LU$-solutions and the set of $LU$-solutions of \eqref{problem} are denoted, respectively, by  $\mathcal{S}^w\eqref{problem}$   and $\mathcal{S}\eqref{problem}$. Clearly, 
\begin{equation*}
\mathcal{S}\eqref{problem}\subset \mathcal{S}^w\eqref{problem}.
\end{equation*}  

We now introduce approximate solutions of \eqref{problem} with respect to $LU$ interval order relation.  Let $\epsilon^L$ and  $\epsilon^U$ be two real numbers satisfying $0\leq \epsilon^L\leq \epsilon^U$ and put   $\mathcal{E}:=[\epsilon^L, \epsilon^U]\in \mathcal{K}_c$.
\begin{definition}{\rm
		Let $x^*\in X$. We say that:
		\begin{enumerate}[(i)]
			\item $x^*$ is an {\em $\mathcal{E}$-$LU$-solution} of \eqref{problem} if there is no $x\in X$ such that 
			$$f(x)\prec_{LU}f(x^*)-\mathcal{E}.$$  
			
			\item $x^*$ is a {\em weakly $\mathcal{E}$-$LU$-solution} of \eqref{problem} if there is no $x\in X$ such that 
			$$f(x)\prec^s_{LU}f(x^*)-\mathcal{E}.$$ 
			
			\item  $x^*$ is an {\em $\mathcal{E}$-quasi-$LU$-solution} of \eqref{problem} if there is no $x\in X$ such that 
			$$f(x)\prec_{LU}f(x^*)-\mathcal{E}\|x-x^*\|.$$  
			
			\item  $x^*$ is a {\em weakly $\mathcal{E}$-quasi-$LU$-solution} of \eqref{problem} if there is no $x\in X$ such that 
			$$f(x)\prec^s_{LU}f(x^*)-\mathcal{E}\|x-x^*\|.$$      
		\end{enumerate}
	}
\end{definition}
We denote the set of $\mathcal{E}$-$LU$-solutions (resp., weakly \mbox{$\mathcal{E}-LU$-solutions}, $\mathcal{E}$-quasi-$LU$-solution, weakly $\mathcal{E}$-quasi-$LU$-solutions)  of \eqref{problem} by $\mathcal{E}$-$\mathcal{S}$\eqref{problem} (resp., $\mathcal{E}$-$\mathcal{S}^w$\eqref{problem}, $\mathcal{E}$-quasi-$\mathcal{S}$\eqref{problem}, $\mathcal{E}$-quasi-$\mathcal{S}^w$\eqref{problem}). Clearly,

\centerline{  $\mathcal{E}$-$\mathcal{S}$\eqref{problem}$\subset$ $\mathcal{E}$-$\mathcal{S}^w$\eqref{problem} and $\mathcal{E}$-quasi-$\mathcal{S}$\eqref{problem} $\subset$ $\mathcal{E}$-quasi-$\mathcal{S}^w$\eqref{problem}.}  

It is easily seen that, when $\mathcal{E}=0$, i.e., $\epsilon^L=\epsilon^U=0$, then the notions of  an $\mathcal{E}$-$LU$-solution and an $\mathcal{E}$-quasi-$LU$-solution (resp., a weakly $\mathcal{E}$-$LU$-solution and a weakly $\mathcal{E}$-quasi-$LU$-solution) defined above coincide with the  one of an $LU$-solution (resp., a weakly $LU$-solution).     

The new concepts of approximate solutions of \eqref{problem} are closed related to the approximate efficient solutions of multiobjective optimization problems. In order to present the relationship between these solution concepts, we first  recall   some types of approximate efficient solutions in multiobjective optimization. Consider the following multiobjective optimization problem
\begin{equation}\label{GM-problem}
\mathrm{Min}\,_{\mathbb{R}^k_+} \{\widetilde{F}(x)\,:\, x\in X\},\tag{$\widetilde{\mathrm{MP}}$} 
\end{equation}
where $\widetilde{F}\colon\mathbb{R}^n\to\mathbb{R}^k$ is a vector-valued function defined on $\mathbb{R}^n$. Let $\epsilon\in\mathbb{R}^k_+$ and $x^*\in X$. We say that:
\begin{enumerate}[(i)]
	\item $x^*$ is an  {\em $\epsilon$-efficient solution} of \eqref{GM-problem} if there is no $x\in X$ such that
	\begin{equation*}
	\widetilde{F}(x)\in \widetilde{F}(x^*)-\epsilon-\mathbb{R}^k_+\setminus\{0\}.
	\end{equation*}
	
	\item $x^*$ is a  {\em weakly $\epsilon$-efficient solution} of \eqref{GM-problem} if there is no $x\in X$ such that
	\begin{equation*}
	\widetilde{F}(x)\in \widetilde{F}(x^*)-\epsilon-\mathrm{int}\,\mathbb{R}^k_+.
	\end{equation*}
	
	\item $x^*$ is an  {\em $\epsilon$-quasi-efficient solution} of \eqref{GM-problem} if there is no $x\in X$ such that
	\begin{equation*}
	\widetilde{F}(x)\in \widetilde{F}(x^*)-\epsilon\|x-x^*\|-\mathbb{R}^k_+\setminus\{0\}.
	\end{equation*}
	
	\item $x^*$ is a {\em weakly  $\epsilon$-quasi-efficient solution} of \eqref{GM-problem} if there is no $x\in X$ such that
	\begin{equation*}
	\widetilde{F}(x)\in \widetilde{F}(x^*)-\epsilon\|x-x^*\|-\mathrm{int}\,\mathbb{R}^k_+.
	\end{equation*}
\end{enumerate}

\begin{lemma}\label{Lemma-2.1} Let  $\epsilon:=(\epsilon^U, \epsilon^L)$.  A point $x^*$ is an $\mathcal{E}$-$LU$-solution of \eqref{problem} if and only if $x^*$ is an $\epsilon$-efficient solution of the following multiobjective optimization problem:
	\begin{equation}\label{M-problem}
	\mathrm{Min}\,_{\mathbb{R}^2_+}\{F(x)\,:\, x\in X\},\tag{MP}
	\end{equation}
	where $F(x):=(f^L(x), f^U(x))$ for all $x\in\mathbb{R}^n$.
\end{lemma}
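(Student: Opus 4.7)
The plan is to prove the two properties equivalent by directly unpacking their defining conditions and matching them term by term, with the only non-trivial ingredient being the formula for the difference of two intervals.

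First I would write out $f(x^*)-\mathcal{E}$ using the subtraction rule $A-B=[a^L-b^U,a^U-b^L]$ recalled at the beginning of Section~\ref{Approximate solutions}. This gives
\[
f(x^*)-\mathcal{E}=\bigl[f^L(x^*)-\epsilon^U,\; f^U(x^*)-\epsilon^L\bigr].
\]
Consequently, for any $x\in X$, the relation $f(x)\prec_{LU}f(x^*)-\mathcal{E}$ is, by the definition of $\prec_{LU}$, exactly the conjunction
\[
f^L(x)\le f^L(x^*)-\epsilon^U,\qquad f^U(x)\le f^U(x^*)-\epsilon^L,
\]
together with at least one of the two inequalities being strict.

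Next I would unpack the $\epsilon$-efficient condition for \eqref{M-problem} with $\epsilon=(\epsilon^U,\epsilon^L)$ and $F(x)=(f^L(x),f^U(x))$. The inclusion $F(x)\in F(x^*)-\epsilon-\mathbb{R}^2_+\setminus\{0\}$ is equivalent to saying that the vector
\[
\bigl(f^L(x^*)-\epsilon^U-f^L(x),\; f^U(x^*)-\epsilon^L-f^U(x)\bigr)
\]
lies in $\mathbb{R}^2_+\setminus\{0\}$; that is, both coordinates are nonnegative and not both zero. This is literally the same system as the one obtained from $\prec_{LU}$ above.

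The two conditions on the existence of a violating $x\in X$ are therefore logically identical, so the negated forms (which define the two solution concepts) also coincide, and the equivalence follows. The only place one has to be careful is the order of the components in the perturbation vector: because interval subtraction swaps the roles of the lower and upper endpoints, the natural match is $\epsilon=(\epsilon^U,\epsilon^L)$ rather than $(\epsilon^L,\epsilon^U)$, and this is the main (and only) subtlety; there is no real obstacle beyond bookkeeping.
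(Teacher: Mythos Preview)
Your proposal is correct and follows essentially the same approach as the paper: both arguments reduce each of the two solution concepts to the common system $f^L(x)\le f^L(x^*)-\epsilon^U$, $f^U(x)\le f^U(x^*)-\epsilon^L$ with at least one strict inequality, and conclude by matching the negations. The only cosmetic difference is that you make the interval subtraction $f(x^*)-\mathcal{E}=[f^L(x^*)-\epsilon^U,\,f^U(x^*)-\epsilon^L]$ explicit up front and argue the biconditional directly, whereas the paper leaves this computation implicit and proves each implication by contradiction.
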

\begin{proof} Let $x^*$ be an $\mathcal{E}$-$LU$-solution of \eqref{problem}. Then, there is no $x\in X$ satisfying
	\begin{equation}\label{equa-2.1}
	f(x)\prec_{LU}f(x^*)-\mathcal{E}.
	\end{equation}
	We claim that $x^*$ is an $\epsilon$-efficient solution  of  \eqref{M-problem}. Indeed,  if otherwise, then there exists $\bar x\in X$ such that
	\begin{equation*}
	F(\bar x)\in F(x^*)-\epsilon-\mathbb{R}^2_+\setminus\{0\},
	\end{equation*}   
	or, equivalently,
	\begin{equation*}
	\begin{cases}
	f^L(\bar x)&\leq f^L(x^*)-\epsilon^U,
	\\
	f^U(\bar x)&\leq f^U(x^*)-\epsilon^L, 
	\end{cases}
	\end{equation*}
	with at least one strict inequality. Hence, $f(\bar x)\prec_{LU}f(x^*)-\mathcal{E}$,  which contradicts to \eqref{equa-2.1}.
	
	Conversely, let $x^*$ be an $\epsilon$-efficient solution of \eqref{M-problem}.  Then, there is no $x\in X$ such that
	\begin{equation*} 
	F(x)\in F(x^*)-\epsilon-\mathbb{R}^2_+\setminus\{0\}.
	\end{equation*}
	This means that there is no $x\in X$ satisfying
	\begin{equation*}
	\begin{cases}
	f^L(x)&\leq f^L(x^*)-\epsilon^U,
	\\
	f^U(x)&\leq f^U(x^*)-\epsilon^L,
	\end{cases}
	\end{equation*}
	with at least one strict inequality.  This implies that
	\begin{equation*}
	f(x)\prec_{LU} f(x^*)-\mathcal{E}.
	\end{equation*}
	Hence, $x^*$ is an $\mathcal{E}$-$LU$-solution of \eqref{problem}. 
\end{proof}

\begin{lemma}\label{Lemma-2.2} Let  $\epsilon:=(\epsilon^U, \epsilon^L)$. A point $x^*$ is a weakly $\mathcal{E}$-$LU$-solution (resp., an $\mathcal{E}$-quasi-$LU$-solution, a weakly $\mathcal{E}$-quasi-$LU$-solution) of \eqref{problem} if and only if $x^*$ is a weakly $\epsilon$-efficient solution (resp., an \mbox{$\epsilon$-quasi-efficient solution}, a weakly $\epsilon$-quasi-efficient solution) of  \eqref{M-problem}.
\end{lemma}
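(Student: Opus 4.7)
The plan is to mimic the proof of Lemma \ref{Lemma-2.1} three times in succession, once for each equivalence claimed, since each assertion is a direct translation between an interval relation and a vector inclusion. The unifying observation behind all three cases is the interval subtraction rule, which gives
$$f(x^*)-\mathcal{E}\|x-x^*\|=\bigl[f^L(x^*)-\epsilon^U\|x-x^*\|,\, f^U(x^*)-\epsilon^L\|x-x^*\|\bigr],$$
and, in particular with $\|x-x^*\|$ replaced by $1$, $f(x^*)-\mathcal{E}=[f^L(x^*)-\epsilon^U,\, f^U(x^*)-\epsilon^L]$. This is precisely what forces the coordinate swap $\epsilon=(\epsilon^U,\epsilon^L)$ when we pass to the multiobjective problem \eqref{M-problem} whose components are ordered as $F=(f^L,f^U)$.

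First I would settle the weakly $\mathcal{E}$-$LU$ case. Unpacking the condition that no $x\in X$ satisfies $f(x)\prec^s_{LU} f(x^*)-\mathcal{E}$ gives, by definition of $\prec^s_{LU}$ and by the formula above, that no $x\in X$ satisfies simultaneously $f^L(x)<f^L(x^*)-\epsilon^U$ and $f^U(x)<f^U(x^*)-\epsilon^L$. The latter is exactly the statement that no $x\in X$ puts $F(x)$ into $F(x^*)-\epsilon-\mathrm{int}\,\mathbb{R}^2_+$, i.e. that $x^*$ is weakly $\epsilon$-efficient for \eqref{M-problem}. Both implications are immediate from this rewriting.

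Next, for the $\mathcal{E}$-quasi-$LU$ case, I would repeat the argument of Lemma \ref{Lemma-2.1} verbatim but with $\mathcal{E}$ replaced throughout by $\mathcal{E}\|x-x^*\|$; since $\|x-x^*\|\geq 0$, the scalar multiplication rule for intervals gives $\mathcal{E}\|x-x^*\|=[\epsilon^L\|x-x^*\|,\epsilon^U\|x-x^*\|]$, and the displayed identity above makes the translation to $F(x)\in F(x^*)-\epsilon\|x-x^*\|-\mathbb{R}^2_+\setminus\{0\}$ automatic. The weakly $\mathcal{E}$-quasi-$LU$ case then combines the two previous modifications in the obvious way, replacing $\prec_{LU}$ by $\prec^s_{LU}$ and $\mathbb{R}^2_+\setminus\{0\}$ by $\mathrm{int}\,\mathbb{R}^2_+$.

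I do not anticipate a genuine obstacle: the proof is entirely a book-keeping translation and no new ideas are required beyond those already used in Lemma \ref{Lemma-2.1}. The only point that demands care, and that I would flag explicitly, is the coordinate swap in the definition of $\epsilon$: because interval subtraction sends $\epsilon^U$ into the lower slot and $\epsilon^L$ into the upper slot of $f(x^*)-\mathcal{E}$, the tolerance vector on the multiobjective side must be taken as $(\epsilon^U,\epsilon^L)$ rather than $(\epsilon^L,\epsilon^U)$; overlooking this would produce a false statement.
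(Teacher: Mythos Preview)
Your proposal is correct and follows exactly the approach the paper intends: the paper's own proof simply states that the argument is similar to that of Lemma \ref{Lemma-2.1} and omits the details. Your explicit unpacking of the interval subtraction formula and the resulting coordinate swap $(\epsilon^U,\epsilon^L)$ is precisely the book-keeping that underlies all three cases.
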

\begin{proof}
	The proof is quiet similar to that of the proof of Lemma \ref{Lemma-2.1}, so omitted.
\end{proof}
\section{EXISTENCE THEOREMS}\label{Existence theorems}
In this section, we assume that $X$ is a nonempty and closed subset in $\mathbb{R}^n$.
\begin{definition}{\rm  We say that the function $f$ is {\em $LU$-bounded from below on} $X$ if there exists an interval $B=[b^L, b^U]\in \mathcal{K}_c$ such that
		\begin{equation*}
		B\preceq_{LU} f(x), \ \ \forall x\in X. 
		\end{equation*}
	}
\end{definition}

By definition, it is easily seen that the interval-valued function $f$ is $LU$-bounded from below on $X$ if and only if the function  $f^L$ is bounded from below on $X$.
\begin{theorem}[Existence of  $\mathcal{E}$-$LU$-solutions] \label{Theorem-3.1} 
	Assume that $f$ is $LU$-bounded from below on $X$ by an interval $B\in\mathcal{K}_c$. Then, for each $\mathcal{E}=[\epsilon^L, \epsilon^U]\in\mathcal{K}_c$ satisfying $0\prec_{LU}\mathcal{E}$,   the problem \eqref{problem} admits at least one $\mathcal{E}$-$LU$-solution.
\end{theorem}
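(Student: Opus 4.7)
The plan is to prove this by a short direct construction using only the bounded-below hypothesis. The key preliminary observation, already noted in the excerpt, is that $LU$-boundedness below of $f$ is equivalent to $f^L$ being bounded below on $X$, so $\alpha := \inf_{x \in X} f^L(x)$ is a finite real number (with $\alpha \geq b^L$). An alternative route would be to invoke Lemma \ref{Lemma-2.1} and appeal to an existence theorem for $\epsilon$-efficient solutions of \eqref{M-problem} from the vector-optimization literature (e.g., Loridan-type results), but a direct argument is cleaner and introduces no extra hypotheses.

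First I would unpack the hypothesis $0 \prec_{LU} \mathcal{E}$: combined with the standing inequality $\epsilon^L \leq \epsilon^U$, it forces $\epsilon^U > 0$, even though $\epsilon^L$ may still equal $0$. This strict positivity of the upper endpoint is the only quantitative ingredient that drives the whole construction. Next, since $\epsilon^U > 0$, the definition of infimum produces some $x^* \in X$ with $f^L(x^*) < \alpha + \epsilon^U$. I would then take this $x^*$ as the candidate $\mathcal{E}$-$LU$-solution.

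To verify the claim, I would argue by contradiction: if some $x \in X$ satisfied $f(x) \prec_{LU} f(x^*) - \mathcal{E}$, then the lower-endpoint comparison built into the definition of $\prec_{LU}$ would give
\[
f^L(x) \leq f^L(x^*) - \epsilon^U < \alpha,
\]
contradicting $\alpha = \inf_X f^L$. The main delicate point --- really the only substantive step --- is matching the endpoint correctly: subtracting $\mathcal{E} = [\epsilon^L, \epsilon^U]$ from $f(x^*)$ shifts the lower endpoint $f^L(x^*)$ by the upper quantity $\epsilon^U$ (not by $\epsilon^L$), so the strict positivity $\epsilon^U > 0$ extracted from $0 \prec_{LU} \mathcal{E}$ is exactly the leverage needed. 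No continuity, convexity, or compactness assumption on $X$ or $f$ enters anywhere, which is consistent with the paper's remark that approximate solutions exist under very weak assumptions.
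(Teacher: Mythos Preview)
Your argument is correct. By picking $x^*$ to be an $\epsilon^U$-approximate minimizer of the scalar lower endpoint $f^L$, you kill the problem in one stroke: the interval subtraction rule makes $(f(x^*)-\mathcal{E})^L = f^L(x^*)-\epsilon^U$, so any $x$ with $f(x)\prec_{LU} f(x^*)-\mathcal{E}$ would satisfy $f^L(x)\leq f^L(x^*)-\epsilon^U<\alpha$, contradicting the definition of $\alpha$. The unpacking of $0\prec_{LU}\mathcal{E}$ to get $\epsilon^U>0$ is also handled correctly.

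The paper follows a different and somewhat longer route. It fixes $x^0\in X$, restricts attention to the sublevel set $\{x\in X: f(x)\preceq_{LU} f(x^0)\}$, and argues by contradiction that if no $\mathcal{E}$-$LU$-solution exists there one can build a sequence $\{x^k\}$ with $f(x^k)\prec_{LU} f(x^{k-1})-\mathcal{E}$; summing yields $f(x^k)\prec_{LU} f(x^0)-k\mathcal{E}$, and dividing by $k$ and passing to the limit (using the two-sided bound $B\preceq_{LU} f(x^k)\preceq_{LU} f(x^0)$) gives $\mathcal{E}\preceq_{LU}0$, a contradiction. A second step then extends the conclusion from the sublevel set to all of $X$. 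Your approach is shorter and entirely scalar, exploiting that only the $f^L$ component needs to be controlled; the paper's approach stays at the interval level and mimics the classical iterative-descent argument for the existence of $\epsilon$-minimizers. Both use nothing beyond bounded-below, but yours isolates exactly the single strict inequality $\epsilon^U>0$ that does the work.
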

\begin{proof} Let $x^0\in X$ and put 
	$$[f(X)]_{f(x^0)}:=\{A\in f(X)\,:\, A\preceq_{LU} f(x^0)\},$$
	where $f(X):=\{f(x)\,:\, x\in X\}$. 
	We first claim that there exists a point $x^*\in f^{-1} ([f(X)]_{f(x^0)} )$ such that 
	\begin{equation}\label{equ-3.1}
	f(x)\nprec_{LU} f(x^*)-\mathcal{E}, \ \ \forall x\in   f^{-1}\big([f(X)]_{f(x^0)}\big),
	\end{equation}
	where $f^{-1} ([f(X)]_{f(x^0)} ):=\{x\in X\,:\, f(x)\in [f(X)]_{f(x^0)}\}$. Indeed, if such a point $x^*$  does not exist, we can find a sequence $\{x^k\}\subset f^{-1} ([f(X)]_{f(x^0)})$ such that
	\begin{equation*}
	f(x^k)\prec_{LU} f(x^{k-1})-\mathcal{E}, \ \ \forall k\in\mathbb{N}.
	\end{equation*}
	Summarizing these inequalities up to $k$, we obtain
	\begin{equation*}
	f(x^k)\prec_{LU} f(x^0)-k\mathcal{E}, \ \ \forall k\in\mathbb{N},
	\end{equation*}
	or, equivalently,
	\begin{equation}\label{equa-3.2}
	\frac{1}{k}f(x^k)\prec_{LU} \frac{1}{k}f(x^0)-\mathcal{E}, \ \ \forall k\in\mathbb{N}.
	\end{equation}
	Due to the construction of the sequence $\{x^k\}$  and the the $LU$-boundedness from below on $X$ of $f$, we have
	$$B\preceq_{LU} f(x^k)\prec_{LU} f(x^0), \ \ \forall k\in\mathbb{N}.$$
	Hence, $\frac{1}{k}f(x^k)\to 0$ as $k\to\infty$. Then letting $k\to\infty$ in \eqref{equa-3.2}, we obtain $\mathcal{E}\preceq_{LU} 0$, which contradicts to the fact that $0\prec_{LU}\mathcal{E}$.
	
	We now prove that   \eqref{equ-3.1} holds also for $x\in X\setminus f^{-1} ([f(X)]_{f(x^0)} )$. Indeed, if otherwise, then there exists $x\in X\setminus f^{-1} ([f(X)]_{f(x^0)} )$ such that
	\begin{equation*} 
	f(x)\prec_{LU} f(x^*)-\mathcal{E}.
	\end{equation*} 
	Combining this with the fact that $f(x^*)\preceq_{LU} f(x^0)$, we obtain $f(x)\prec_{LU} f(x^0)$, a contradiction. Therefore,
	\begin{equation*} 
	f(x)\nprec_{LU} f(x^*)-\mathcal{E}, \ \ \forall x\in   X.
	\end{equation*}
	This means that $x^*$ is an $\mathcal{E}$-$LU$-solution of \eqref{problem}. 
\end{proof}

\begin{example}\label{Example-3.1}\rm 
	Let $f\colon\mathbb{R}^2\to\mathcal{K}_c$ be an interval-valued function defined by 
	$$f(x)=[f^L(x), f^U(x)]=[x_1^2+(x_1x_2-1)^2, 2x_1^2+(x_1x_2-1)^2]$$ 
	for all $x=(x_1, x_2)\in \mathbb{R}^2$ and  let $X=\mathbb{R}^2$. Then we have
	$0<f^L(x)\leq f^U(x)$ for all $x\in\mathbb{R}^2$. Hence, $f$ is bounded from below on $X$. We claim that  the problem \eqref{problem} has no weakly $LU$-solution. Indeed, let $x^*$ be an arbitrary point in $X$. Then, $0<f^L(x^*)\leq f^U(x^*)$. Let $\{x^k\}$ be a sequence defined by $x^k=(\frac{1}{k}, k)$ for each $k\in \mathbb{N}$. Then we have
	\begin{align*}
	&\lim\limits_{k\to\infty}f^L(x^k)=\lim\limits_{k\to\infty}\frac{1}{k^2}=0<f^L(x^*),
	\\
	&\lim\limits_{k\to\infty}f^U(x^k)=\lim\limits_{k\to\infty}\frac{2}{k^2}=0<f^U(x^*).
	\end{align*}
	This implies that there exists $K\in\mathbb{N}$ such that
	\begin{align*}
	f^L(x^k)<f^L(x^*),
	\\
	f^U(x^k)<f^U(x^*),
	\end{align*}
	for all $k\geq K$. Hence, $x^*$ is not a weakly $LU$-solution of \eqref{problem}. This means that the set $\mathcal{S}^w\eqref{problem}$ is empty. Consequently, $\mathcal{S}\eqref{problem}$ is empty. 
	
	However, by Theorem \ref{Theorem-3.1}, for all $\mathcal{E}\in\mathcal{K}_c$, $0\prec_{LU}\mathcal{E}$,    \eqref{problem} has at least an $\mathcal{E}$-$LU$-solution. Consequently, \eqref{problem} admits at least one weakly $\mathcal{E}$-$LU$-solution. 
\end{example}

We say that the function $f$ is {\em lower-semicontinuous} if $f^L$ and $f^U$ are   lower-semicontinuous functions.
\begin{theorem}[Existence of $\mathcal{E}$-quasi-$LU$-solutions]\label{Existence-quasi-solution}
	Assume that $f$ is lower-semicontinuous and $LU$-bounded from below on $X$ by an interval $B\in\mathcal{K}_c$. Then, for every $0\prec^s_{LU}\mathcal{E}$, the problem \eqref{problem} admits at least one $\mathcal{E}$-quasi-$LU$-solution.
\end{theorem}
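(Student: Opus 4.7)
The plan is to apply Ekeland's variational principle to a linear scalarization of the two components of $f$. I would set $\varphi(x) := f^L(x) + f^U(x)$. Since $f$ is lower-semicontinuous (both $f^L$ and $f^U$ are lsc by hypothesis), $\varphi$ is lsc on $\mathbb{R}^n$. The $LU$-boundedness of $f$ from below by $B=[b^L, b^U]$ yields $f^L(x) \geq b^L$ for all $x\in X$, and since $f^U(x) \geq f^L(x)$ pointwise, $\varphi(x) \geq 2b^L$ on $X$. Thus $\varphi$ is lsc and bounded below on the nonempty closed set $X$, which is exactly what Ekeland requires.

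I would then invoke Ekeland's variational principle for $\varphi$ on $X$ with scaling parameter $\delta := \epsilon^L + \epsilon^U$, which is strictly positive because $0 \prec^s_{LU} \mathcal{E}$ is equivalent to $\epsilon^L > 0$ and $\epsilon^U > 0$. This produces a point $x^* \in X$ such that
\begin{equation*}
\varphi(x^*) \leq \varphi(x) + (\epsilon^L + \epsilon^U)\|x - x^*\| \quad \text{for all } x \in X.
\end{equation*}

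To finish, I would verify that this $x^*$ is the desired $\mathcal{E}$-quasi-$LU$-solution by contradiction. If $x^*$ were not such a solution, there would exist $\bar x \in X$ with $f(\bar x) \prec_{LU} f(x^*) - \mathcal{E}\|\bar x - x^*\|$. Unpacking this through the interval arithmetic recalled in Section~\ref{Approximate solutions}, it is equivalent to
\begin{equation*}
f^L(\bar x) \leq f^L(x^*) - \epsilon^U\|\bar x - x^*\|, \qquad f^U(\bar x) \leq f^U(x^*) - \epsilon^L\|\bar x - x^*\|,
\end{equation*}
with at least one strict inequality. Adding these two yields $\varphi(\bar x) < \varphi(x^*) - (\epsilon^L + \epsilon^U)\|\bar x - x^*\|$, which forces $\bar x \neq x^*$ and directly contradicts the Ekeland inequality above.

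The main obstacle, and the only real choice in the argument, is to identify the correct scalarization and the matching Ekeland parameter so that the inequality obtained by summing the two components strictly beats the Ekeland perturbation $\delta\|\cdot\|$. Once the pair $(\varphi, \delta) = (f^L + f^U,\, \epsilon^L + \epsilon^U)$ is fixed, everything else is a routine comparison. An alternative presentation would be to apply Lemma~\ref{Lemma-2.2} first, reducing the claim to existence of an $\epsilon$-quasi-efficient solution of \eqref{M-problem} with $\epsilon = (\epsilon^U, \epsilon^L)$, and then quote a Loridan-type existence theorem; the direct Ekeland argument above is essentially the same proof made self-contained.
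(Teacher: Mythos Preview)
Your argument is correct and is genuinely different from the paper's proof. The paper does not scalarize; instead it first invokes Theorem~\ref{Theorem-3.1} to obtain an $\mathcal{E}$-$LU$-solution, translates this via Lemma~\ref{Lemma-2.1} into a weakly $\epsilon$-efficient point of \eqref{M-problem}, and then applies a \emph{vectorial} Ekeland principle (Araya's Lemma~\ref{Lemma-araya}) with $k^0=\epsilon=(\epsilon^U,\epsilon^L)$ and $C=\mathbb{R}^2_+$, finally returning to \eqref{problem} through Lemma~\ref{Lemma-2.2}. Your route is more elementary and self-contained: the scalar Ekeland principle on the closed set $X$ applied to $\varphi=f^L+f^U$ with parameter $\delta=\epsilon^L+\epsilon^U$ immediately produces the desired point, and the contradiction step exploits only the additivity of the two defining inequalities of $\prec_{LU}$. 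The paper's approach, by contrast, keeps the argument inside the multiobjective framework developed in Section~\ref{Approximate solutions}, which is thematically consistent with the rest of the article and would transfer to ordering cones where a sum-type scalarization is unavailable; your approach buys a shorter proof that avoids both Theorem~\ref{Theorem-3.1} and the vector-valued variational principle altogether.
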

To prove Theorem \ref{Existence-quasi-solution}, we need the following vectorial Ekeland's variational principle. 
\begin{lemma}[{see \cite[Theorem 3.1]{araya}}] \label{Lemma-araya}
	Let $(X,d)$ be a complete metric space and $Y$ a Banach space. Assume that $C\subset Y$ is a closed, convex and pointed cone with $\mathrm{int}\,C\neq \emptyset$. Let $k^0\in\mathrm{int}\,C$ and let $F\colon X\to Y$ be a vector-valued function. For every $\varepsilon>0$ there is an initial point $x_0\in X$  such that $F(X)\cap (F(x_0)-\varepsilon k^0-\mathrm{int}\,C)=\emptyset$ and $F$ satisfies
	\begin{equation*}
	\{x^\prime\in X\,:\, F(x^\prime)+d(x^\prime, x)k^0\in F(x)-C\} \ \ \text{is closed for every}\ \ x\in X.
	\end{equation*}
	Then there exists $\bar x\in X$ such that
	\begin{enumerate}[\rm(i)]
		\item $F(\bar x)\in F(x_0)-\mathrm{int}\,C$,
		\item $d(x_0, \bar x)\leq 1$
		\item $F(x)\notin F(\bar x)-\varepsilon d(x, \bar x)k^0-C$ for all $x\neq \bar x$.
	\end{enumerate}
\end{lemma}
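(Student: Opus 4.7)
The plan is to mimic the scalar Ekeland variational principle via a partial order on $X$ induced by $F$, the metric $d$, the cone $C$, and the direction $k^0$, and then to extract a minimal element by a Cauchy construction backed by a Hahn-Banach scalarization. Specifically, I would define $x' \preceq x$ to mean $F(x) - F(x') - \varepsilon\, d(x',x) k^0 \in C$. Reflexivity is immediate since $0 \in C$; transitivity follows by adding two witness relations and using the triangle inequality together with $C$ being a convex cone, so that the residual multiple of $k^0$ remains in $C$; antisymmetry uses that $C$ is pointed, for adding the two relations forces $-2\varepsilon d(x,x') k^0 \in C \cap (-C) = \{0\}$, hence $d(x,x')=0$. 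The closedness of each sublevel set $S(x):=\{x'\in X : x'\preceq x\}$ follows from the stated hypothesis (applied with $\varepsilon k^0 \in \mathrm{int}\,C$ in place of $k^0$, the property being preserved under this rescaling of the metric).

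Second, I would use the approximate-minimality hypothesis to bound $S(x_0)$. If $x' \preceq x_0$ and $d(x', x_0) > 1$, then
\[F(x_0) - F(x') - \varepsilon k^0 = \bigl(F(x_0)-F(x')-\varepsilon d(x',x_0) k^0\bigr) + \varepsilon(d(x',x_0)-1) k^0\]
lies in $C + \mathrm{int}\,C \subseteq \mathrm{int}\,C$, contradicting $F(X) \cap (F(x_0) - \varepsilon k^0 - \mathrm{int}\,C) = \emptyset$. Hence $d(x',x_0) \leq 1$ throughout $S(x_0)$, which will ultimately yield conclusion (ii).

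Third, I would produce $\bar x$ by a standard greedy construction. Pick via Hahn-Banach a functional $y^* \in Y^*$ with $y^*|_C \geq 0$ and $y^*(k^0)=1$; such $y^*$ is strictly positive on $\mathrm{int}\,C$. Starting from $x_0$, choose $x_{n+1} \in S(x_n)$ so that $y^*(F(x_{n+1}))$ is within $2^{-n}$ of $\inf_{x' \in S(x_n)} y^*(F(x'))$. Scalarizing $x_{n+1} \preceq x_n$ gives $\varepsilon d(x_{n+1}, x_n) \leq y^*(F(x_n)) - y^*(F(x_{n+1}))$; the scalar sequence $y^*(F(x_n))$ is nonincreasing and bounded below (by the diameter bound on $S(x_0)$ combined with scalarization), hence convergent, so $\{x_n\}$ is Cauchy and has a limit $\bar x \in X$ by completeness. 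Closedness of each $S(x_n)$ then places $\bar x$ in every $S(x_n)$, in particular $\bar x \in S(x_0)$, giving (ii); choosing $x_1$ strictly below $x_0$ at the first step makes $d(\bar x, x_0)>0$ and therefore $F(\bar x) \in F(x_0) - \mathrm{int}\,C$, yielding (i). For (iii), suppose some $x \neq \bar x$ satisfies $F(x) \in F(\bar x) - \varepsilon d(x,\bar x) k^0 - C$, i.e.\ $x \preceq \bar x$; by transitivity $x \in S(x_n)$ for every $n$, so $y^*(F(x)) \geq y^*(F(x_{n+1})) - 2^{-n} \to y^*(F(\bar x))$, contradicting the strict inequality $y^*(F(x)) < y^*(F(\bar x))$ that comes from $y^*(\varepsilon d(x,\bar x) k^0) > 0$.

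The main obstacle is the careful handling of strict versus nonstrict relations in the vector setting: antisymmetry requires pointedness of $C$, the strict conclusion in (i) requires $\bar x$ to be reached by genuine decrease (not by the trivial choice $\bar x = x_0$), and the boundedness step relies on the inclusion $C + \mathrm{int}\,C \subseteq \mathrm{int}\,C$. A secondary point is that the closedness hypothesis must be shown to persist when $k^0$ is replaced by $\varepsilon k^0$, which amounts to a routine rescaling of the metric by $\varepsilon$ but should be checked explicitly before invoking it inside the Cauchy construction.
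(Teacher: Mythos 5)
The paper offers no proof of this lemma---it is quoted verbatim from Araya \cite{araya}, whose argument runs through the nonlinear (Gerstewitz/Tammer--Weidner) scalarizing functional $\varphi_{C,k^0}(y):=\inf\{t\in\mathbb{R}\,:\,y\in tk^0-C\}$---so your attempt must stand on its own. Your skeleton (the order $\preceq$, the diameter bound via $C+\mathrm{int}\,C\subseteq\mathrm{int}\,C$, a greedy decreasing chain, closedness of sublevel sets to capture the limit) is the standard one, but the scalarization step has a fatal gap: nothing guarantees that $\inf_{x'\in S(x_n)}y^*(F(x'))$ is finite, so the selection of $x_{n+1}$ within $2^{-n}$ of that infimum can be impossible at the very first step. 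Your claimed lower bound ``by the diameter bound on $S(x_0)$ combined with scalarization'' is a non sequitur: the diameter bound lives in $X$, and since $F$ is an arbitrary function (no continuity or boundedness is assumed), $F(S(x_0))$ can be unbounded. The approximate-minimality hypothesis only excludes $F(X)$ from the shifted open cone $F(x_0)-\varepsilon k^0-\mathrm{int}\,C$, and the half-space $\{y\,:\,y^*(y)<t\}$ is far larger than that cone. Concretely: let $X=\{p_0,p_1,q_1,p_2,q_2,\dots\}$ with the discrete metric (complete, every subset closed, so the closedness hypothesis holds for any coefficient), $Y=\mathbb{R}^2$, $C=\mathbb{R}^2_+$, $k^0=(1,1)$, $\varepsilon=1$, $F(p_0)=(0,0)$, $F(p_n)=(-1,-n)$, $F(q_n)=(-n,-1)$. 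No value is componentwise strictly below $(-1,-1)$, so $x_0=p_0$ satisfies the hypothesis, and all $p_n,q_n$ lie in $S(p_0)$; but every admissible functional $y^*=(\alpha,1-\alpha)$, $\alpha\in[0,1]$, has $y^*\circ F$ unbounded below on $S(p_0)$ (use the $q_n$ if $\alpha>0$, the $p_n$ if $\alpha<1$). So no linear scalarization works, even though the conclusion of the lemma holds (take $\bar x=p_1$). This is exactly why Araya's proof uses $\varphi_{C,k^0}$ instead: it is finite-valued (by pointedness and closedness of $C$), $C$-monotone, satisfies $\varphi_{C,k^0}(y+tk^0)=\varphi_{C,k^0}(y)+t$, and the hypothesis $F(X)\cap(F(x_0)-\varepsilon k^0-\mathrm{int}\,C)=\emptyset$ translates precisely into the missing lower bound $\varphi_{C,k^0}(F(x)-F(x_0))\geq-\varepsilon$ for all $x\in X$; with $\phi:=\varphi_{C,k^0}(F(\cdot)-F(x_0))$ your chain construction then goes through.

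Two further points. First, the rescaling of the closedness condition is not routine, as you half-suspect: the hypothesis asserts closedness of the specific sets $\{x'\,:\,F(x')+d(x',x)k^0\in F(x)-C\}$, and replacing $k^0$ by $\varepsilon k^0$ produces genuinely different sets whose closedness is neither implied by nor implies that of the original family (and conclusion (iii) with coefficient $\varepsilon<1$ is strictly stronger than with coefficient $1$, so you cannot simply run the argument with the unscaled order either); this must be assumed or derived, not waved through. Second, your route to (i) presupposes that some $x_1\neq x_0$ with $x_1\preceq x_0$ exists; if $S(x_0)=\{x_0\}$ (e.g.\ $F$ constant, where all hypotheses are satisfied) no such point exists, your construction returns $\bar x=x_0$, and (i) in the strict form $F(\bar x)\in F(x_0)-\mathrm{int}\,C$ is actually \emph{false}, since $0\notin\mathrm{int}\,C$ for a pointed cone in a nontrivial space. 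This reveals a defect in the transcription of the lemma rather than something a proof can supply: the conclusion your chain argument naturally delivers, and the one in Araya's original, is the nonstrict inclusion $F(\bar x)\in F(x_0)-\varepsilon d(x_0,\bar x)k^0-C$, obtained simply from $\bar x\in S(x_0)$.
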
 
\noindent{\em Proof of Theorem \ref{Existence-quasi-solution}.}
Let $x^0\in X$. Then, by Theorem \ref{Theorem-3.1}, the problem \eqref{problem} has at least an $\mathcal{E}$-$LU$-solution, say $x^*$,  satisfying  $f(x^*)\preceq_{LU}f(x^0)$. By Lemma \ref{Lemma-2.1}, $x^*$ is an $\epsilon$-efficient solution   of \eqref{M-problem} and so is a weakly $\epsilon$-efficient solution of \eqref{M-problem}, where $\epsilon=(\epsilon^U, \epsilon^L)$.   Consequently,
\begin{equation*}
F(X)\cap [F(x^*) - \epsilon - \mathrm{int}\,\mathbb{R}^2_+]\ =\  \emptyset.
\end{equation*}
By the lower-semicontinuity of $f^L, f^U$ and the closedness of $X$, it is easy to see that for each $x\in X$ the following set
$$\{u\in X\,:\, F(u)+\epsilon\|u-x\|\in F(x) -{\mathbb{R}^2}\}$$
is closed. By Lemma \ref{Lemma-araya}, there exists a point $\bar x\in X$ such that $F(\bar x)\in F(x^*) -\mathrm{int}\,\mathbb{R}^2$ and
$$F(x)\notin F(\bar x)-\epsilon\|x-\bar x\|-\mathbb{R}^2_+, \ \ \forall x\in X\setminus\{\bar x\}.$$
Hence,
$$F(x)\notin F(\bar x)-\epsilon\|x-\bar x\|-\mathbb{R}^2_+\setminus\{0\}, \ \ \forall x\in X,$$
or, equivalently, $\bar x$ is an $\epsilon$-quasi-efficient solution of \eqref{M-problem}. Thus, $\bar x$ is an $\mathcal{E}$-quasi-$LU$-solution of \eqref{problem} due to Lemma \ref{Lemma-2.2}. The proof is complete. $\hfill\Box$   
\begin{example}\rm 
	Let $f$ and $X$ be as in Example \ref{Example-3.1}. Then, by Theorem \ref{Existence-quasi-solution},   the  sets $\mathcal{E}$-quasi-$\mathcal{S}\eqref{problem}$ and $\mathcal{E}$-quasi-$\mathcal{S}^w\eqref{problem}$  are nonempty for all $\mathcal{E}\in\mathcal{K}_c$ satisfying $0\prec^s_{LU}\mathcal{E}$. 
\end{example} 

\section{KKT OPTIMALITY CONDITIONS}\label{KKT conditions}
In this section, we assume that  $f^L$, $f^U$, and $g_j$, $j=1, \ldots, m$, are  convex functions. Since every real-valued convex function is continuous, the constraint set $X$ is closed and convex. In order to present optimality conditions for approximate solutions of \eqref{problem}, we  recall some notations and basic results from convex analysis.

\subsection{The approximate subdifferential}

Let $\varphi\colon\mathbb{R}^n\to \overline{\mathbb{R}}$ be a convex function. The {\em conjugate function} of $\varphi$, $\varphi^*\colon\mathbb{R}^n\to \overline{\mathbb{R}}$, is defined by
\begin{equation*}
\varphi^*(v):=\sup\{\langle v, x\rangle-\varphi(x)\,:\, x\in\mathrm{dom}\,\varphi\}.
\end{equation*} 
For  $\varepsilon\geq 0$ the {\em  $\varepsilon$-subdifferential } of $\varphi$ at $x^*\in \mathrm{dom}\,\varphi$ is given by:
\begin{equation*}
\partial_{\varepsilon}\varphi (x^*):=\{v\in\mathbb{R}^n\,:\, \varphi(x)-\varphi(x^*)\geq \langle v, x-x^*\rangle-\varepsilon,  \ \ \forall x\in\mathrm{dom}\,\varphi\}.
\end{equation*}
When $\varepsilon=0$, $\partial_0\varphi(x^*)$ coincides with $\partial\varphi(x^*)$, the subdifferential of $\varphi$ at $x^*$  (see, e.g., \cite{Rockafellar-70,Draha-Dutta-12}). It is well-known that
\begin{equation*}
\partial_{\mu\varepsilon}(\mu\varphi (x^*))=\mu\partial_{\varepsilon}\varphi(x^*), \ \ \forall \mu>0.
\end{equation*}
\begin{lemma}[{Sum rule \cite[Theorem 2.115]{Draha-Dutta-12}}]
	Consider two proper convex functions $\varphi_i\colon\mathbb{R}^n\to\overline{\mathbb{R}}$, $i=1, 2$, such that $\mathrm{ri}\,\mathrm{dom}\varphi_1\cap \mathrm{ri}\,\mathrm{dom}\varphi_2\neq\emptyset$. Then for $\varepsilon\geq 0$ and $x\in\mathrm{dom}\varphi_1\cap \mathrm{dom}\varphi_2$,
	\begin{equation*}
	\partial_{\varepsilon}(\varphi_1+\varphi_2)(x)=\bigcup_{\substack{\varepsilon_1+\varepsilon_2=\varepsilon\\\varepsilon_1,\;\varepsilon_2\geq0}}\big(
	\partial_{\varepsilon_1}\varphi_1(x)+\partial_{\varepsilon_2}\varphi_2(x)\big).
	\end{equation*}
\end{lemma}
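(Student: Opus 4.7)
The plan is to prove the two inclusions separately, with the $\supseteq$ direction being essentially immediate from the definition and the $\subseteq$ direction relying on Fenchel--Young duality together with the Moreau--Rockafellar formula for the conjugate of a sum.

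For the easy inclusion I would fix $v_i\in\partial_{\varepsilon_i}\varphi_i(x)$ with $\varepsilon_i\ge 0$ and $\varepsilon_1+\varepsilon_2=\varepsilon$, write out the two defining inequalities at an arbitrary $y\in\mathrm{dom}\,\varphi_1\cap\mathrm{dom}\,\varphi_2$, and add them; the result is $(\varphi_1+\varphi_2)(y)-(\varphi_1+\varphi_2)(x)\ge\langle v_1+v_2,y-x\rangle-\varepsilon$, i.e.\ $v_1+v_2\in\partial_\varepsilon(\varphi_1+\varphi_2)(x)$. No qualification is needed for this half.

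For the harder inclusion the first step is to establish the Fenchel--Young style characterization
\[
v\in\partial_\varepsilon\varphi(x)\ \Longleftrightarrow\ \varphi(x)+\varphi^*(v)\le\langle v,x\rangle+\varepsilon,
\]
which is a one-line rearrangement of the definitions of $\partial_\varepsilon$ and $\varphi^*$. Applying this to $\varphi_1+\varphi_2$ at a given $v\in\partial_\varepsilon(\varphi_1+\varphi_2)(x)$ yields $(\varphi_1+\varphi_2)(x)+(\varphi_1+\varphi_2)^*(v)\le\langle v,x\rangle+\varepsilon$. Invoking the Moreau--Rockafellar conjugate-of-sum theorem, whose validity requires precisely the assumed relative-interior qualification, I would obtain $v_1,v_2$ with $v_1+v_2=v$ such that $(\varphi_1+\varphi_2)^*(v)=\varphi_1^*(v_1)+\varphi_2^*(v_2)$. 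Setting $\varepsilon_i:=\varphi_i(x)+\varphi_i^*(v_i)-\langle v_i,x\rangle$ for $i=1,2$, Fenchel--Young forces $\varepsilon_i\ge 0$, and summing the defining identities shows $\varepsilon_1+\varepsilon_2\le\varepsilon$. The reverse direction of the characterization then yields $v_i\in\partial_{\varepsilon_i}\varphi_i(x)$. If strict inequality holds, I would absorb the slack $\varepsilon-\varepsilon_1-\varepsilon_2$ into $\varepsilon_1$ via the trivial monotonicity $\partial_\alpha\varphi\subset\partial_\beta\varphi$ whenever $\alpha\le\beta$, so that the parameters add exactly to $\varepsilon$.

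The main obstacle is the Moreau--Rockafellar conjugate-of-sum identity with \emph{attainment} of the infimal convolution, since this is precisely where the qualification $\mathrm{ri}\,\mathrm{dom}\,\varphi_1\cap\mathrm{ri}\,\mathrm{dom}\,\varphi_2\ne\emptyset$ is consumed and its proof rests on a Hahn--Banach separation of epigraphs in $\mathbb{R}^{n+1}$. Everything else in the argument---the Fenchel--Young equivalence, the easy inclusion, and the bookkeeping of the $\varepsilon_i$---is short and mechanical, so I would import the conjugate-of-sum step from a standard reference such as Rockafellar's \emph{Convex Analysis} rather than reprove it inline.
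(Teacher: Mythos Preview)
Your proposal is correct and follows the standard route to this result. However, the paper does not actually prove this lemma: it is stated as a citation of \cite[Theorem 2.115]{Draha-Dutta-12} and used as a black box, with no accompanying proof in the text. So there is nothing to compare against on the paper's side---you have supplied a full argument where the paper simply imports the statement from the literature. Your outline via the Fenchel--Young characterization of $\partial_\varepsilon$ together with the Moreau--Rockafellar conjugate-of-sum formula (with attainment of the infimal convolution under the relative-interior qualification) is the canonical proof, and the bookkeeping with the slack $\varepsilon-\varepsilon_1-\varepsilon_2$ absorbed via monotonicity of $\varepsilon\mapsto\partial_\varepsilon$ is handled correctly.
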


We say that the constrain set $X$ satisfies the {\em Slater constraint qualification} if there exists $\hat{x}\in \mathbb{R}^n$ such that $g_j(\hat{x})<0$, for all $j\in J$. The following result gives  necessary and sufficient optimality conditions for a feasible point to be an approximate solution of a convex programming problem. 

\begin{theorem}[{See \cite[Theorem 10.9]{Draha-Dutta-12}}]\label{Theorem-4.1} Let $\varphi\colon\mathbb{R}^n\to\mathbb{R}$ be a convex function  and let $\varepsilon\geq 0$. Assume that $X$ satisfies the Slater constraint qualification. Then $x^*\in X$ is a $\varepsilon$-solution of $\varphi$ on $X$, i.e., $\varphi(x)\geq \varphi(x^*)-\varepsilon$ for all $x\in X$, if and only if there exist $\varepsilon_0\geq 0$, $\varepsilon_j\geq 0$, and $\lambda_j\geq 0$, $j\in J$, such that
	\begin{equation*}
	0\in \partial_{\varepsilon_0}\varphi(x^*)+\sum_{j=1}^{m}\partial_{\varepsilon_j}(\lambda_jg_j)(x^*)\ \ \text{and}\ \ \sum_{j=0}^{m}\varepsilon_j-\varepsilon\leq \sum_{j=1}^{m}\lambda_jg_j(x^*).
	\end{equation*} 
	
\end{theorem}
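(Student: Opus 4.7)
The statement is cited from Dhara--Dutta, so the plan is to reconstruct the standard convex-analytic proof. The overall strategy is to reduce the constrained problem to an unconstrained one via the indicator function $\delta_X$, observe that $x^*$ is an $\varepsilon$-solution exactly when $0\in\partial_\varepsilon(\varphi+\delta_X)(x^*)$, apply the $\varepsilon$-subdifferential sum rule, and then translate the piece involving $\partial_{\varepsilon'}\delta_X(x^*)$ into multipliers $\lambda_j\geq 0$ and residues $\varepsilon_j\geq 0$ attached to the constraint functions $g_j$.

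The sufficiency direction is the easy half. Assume the stated inclusion and inequality. Pick $v_0\in\partial_{\varepsilon_0}\varphi(x^*)$ and $v_j\in\partial_{\varepsilon_j}(\lambda_j g_j)(x^*)$ with $v_0+\sum_{j=1}^m v_j=0$. For any $x\in X$, write the $\varepsilon_j$-subgradient inequalities
\begin{equation*}
\varphi(x)-\varphi(x^*)\geq\langle v_0,x-x^*\rangle-\varepsilon_0,\qquad \lambda_j g_j(x)-\lambda_j g_j(x^*)\geq\langle v_j,x-x^*\rangle-\varepsilon_j,
\end{equation*}
sum them so the linear terms cancel, use $\lambda_j g_j(x)\leq 0$ because $x\in X$ and $\lambda_j\geq 0$, and finally invoke the hypothesis $\sum_{j=0}^m\varepsilon_j-\varepsilon\leq\sum_{j=1}^m\lambda_j g_j(x^*)$ to conclude $\varphi(x)\geq\varphi(x^*)-\varepsilon$.

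The necessity direction is the technical core. Starting from $0\in\partial_\varepsilon(\varphi+\delta_X)(x^*)$, I would apply the approximate sum rule stated just before the theorem to the convex functions $\varphi$ and $\delta_X$. Since $\varphi$ is real-valued on $\mathbb{R}^n$ its domain is all of $\mathbb{R}^n$, so the relative-interior intersection condition reduces to $\mathrm{ri}\,X\neq\emptyset$, which is delivered by the Slater condition. This yields $\varepsilon_0,\varepsilon'\geq 0$ with $\varepsilon_0+\varepsilon'=\varepsilon$ and vectors $u\in\partial_{\varepsilon_0}\varphi(x^*)$, $w\in\partial_{\varepsilon'}\delta_X(x^*)$ with $u+w=0$. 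The remaining task is to represent $w\in\partial_{\varepsilon'}\delta_X(x^*)$ as $w=\sum_{j=1}^m v_j$ with $v_j\in\partial_{\varepsilon_j}(\lambda_j g_j)(x^*)$ and $\sum_{j=1}^m\varepsilon_j\leq\varepsilon'+\sum_{j=1}^m\lambda_j g_j(x^*)$. For this I would use the conjugate formula $\delta_X^*=\mathrm{cl}\,\inf_{\lambda\geq 0}(\sum_j\lambda_j g_j)^*$ obtained from Fenchel--Moreau duality for $\delta_X=\sup_{\lambda\geq 0}\sum_j\lambda_j g_j$; Slater's condition guarantees that the infimum is attained and that the closure is superfluous. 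Unfolding the definition $w\in\partial_{\varepsilon'}\delta_X(x^*)\iff \delta_X^*(w)+\delta_X(x^*)\leq\langle w,x^*\rangle+\varepsilon'$ and plugging in the dual representation produces multipliers $\lambda_j\geq 0$ and an inequality that splits, after invoking the definition of $\partial_{\varepsilon_j}$ and an application of the approximate sum rule inside the sum $\sum_j\lambda_j g_j$, into the desired $v_j$ and $\varepsilon_j$ together with the slackness bound.

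The main obstacle is precisely this last step: showing that every element of $\partial_{\varepsilon'}\delta_X(x^*)$ admits a KKT-type decomposition with the correct control on the residues. It requires Slater's condition to rule out a duality gap in the representation of $\delta_X^*$, and a careful bookkeeping of the $\varepsilon_j$'s through Fenchel's inequality $\lambda_j g_j(x^*)+(\lambda_j g_j)^*(v_j)-\langle v_j,x^*\rangle\geq 0$, with equality deficit at most $\varepsilon_j$. Everything else (existence of multipliers, linearity of the sum rule, nonnegativity of $\varepsilon_j$) is essentially algebraic once this representation is secured.
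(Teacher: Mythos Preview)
The paper does not prove this theorem at all: it is quoted from \cite[Theorem 10.9]{Draha-Dutta-12} and used as a black box, so there is no ``paper's own proof'' to compare against. Your reconstruction is the standard convex-analytic argument and is correct in outline; the sufficiency half is complete, and the necessity half via $0\in\partial_\varepsilon(\varphi+\delta_X)(x^*)$, the approximate sum rule, and the Lagrangian representation of $\delta_X^*$ under Slater is exactly the route taken in the cited reference, with the only delicate point being the bookkeeping you flag in the final paragraph.
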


\subsection{KKT conditions for weakly $\mathcal{E}$-$LU$-solutions} 
\begin{lemma} Let $x^*\in X$.  Then $x^*$ is a weakly-$\mathcal{E}$-$LU$-solution
	of \eqref{problem} if and only if there exist $\mu^L\geq 0$, $\mu^U\geq 0$, $\mu^L+\mu^U=1$ such that
	\begin{equation}\label{equ-lm4.2-1}
	\mu^L f^L(x)+\mu^Uf^U(x)\geq \mu^L f^L(x^*)+\mu^Uf^U(x^*)-\mu^L\epsilon^U-\mu^U\epsilon^L, \ \ \forall x\in X.
	\end{equation}
\end{lemma}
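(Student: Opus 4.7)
The plan is to translate the weakly $\mathcal{E}$-$LU$-solution property into a weighted scalarization inequality via the bi-objective reduction supplied by Lemma \ref{Lemma-2.2}, and then split into an easy sufficiency direction and a harder necessity direction handled by convex separation.

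First, by Lemma \ref{Lemma-2.2}, $x^*$ is a weakly $\mathcal{E}$-$LU$-solution of \eqref{problem} iff $x^*$ is a weakly $\epsilon$-efficient solution of \eqref{M-problem} with $\epsilon=(\epsilon^U,\epsilon^L)$ and $F=(f^L,f^U)$. The claimed scalar inequality \eqref{equ-lm4.2-1} is precisely $\langle \mu, F(x)\rangle \geq \langle \mu, F(x^*)\rangle - \langle \mu, \epsilon\rangle$ for $\mu=(\mu^L,\mu^U)\in\mathbb{R}^2_+$ with $\mu^L+\mu^U=1$. So it suffices to show that a feasible $x^*$ is weakly $\epsilon$-efficient for \eqref{M-problem} iff such a $\mu$ exists.

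For sufficiency (the easy half), I will argue by contradiction: if \eqref{equ-lm4.2-1} holds for some $\mu\geq 0$ with $\mu^L+\mu^U=1$ but $x^*$ fails to be weakly $\epsilon$-efficient, then some $x\in X$ satisfies $f^L(x)<f^L(x^*)-\epsilon^U$ and $f^U(x)<f^U(x^*)-\epsilon^L$. Multiplying these strict inequalities by $\mu^L$ and $\mu^U$ respectively and adding (using $\mu^L+\mu^U=1$ so at least one weight is positive) immediately contradicts \eqref{equ-lm4.2-1}.

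For necessity (the main obstacle), I will use convex separation in $\mathbb{R}^2$. Form the two sets
\begin{equation*}
A:=F(X)+\mathbb{R}^2_+,\qquad B:=F(x^*)-\epsilon-\mathrm{int}\,\mathbb{R}^2_+.
\end{equation*}
Convexity of $f^L,f^U$ and $X$ makes $A$ convex; $B$ is open and convex. The weakly $\epsilon$-efficient property of $x^*$ translates, as in the discussion following Lemma \ref{Lemma-2.2}, into $F(X)\cap B=\emptyset$; monotonicity in the $+\mathbb{R}^2_+$ direction upgrades this to $A\cap B=\emptyset$. A standard separation theorem in $\mathbb{R}^2$ then yields a nonzero $\mu=(\mu^L,\mu^U)$ and a scalar $\alpha$ with $\langle\mu,a\rangle\geq\alpha\geq\langle\mu,b\rangle$ for all $a\in A$, $b\in B$. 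Because $A$ contains $F(X)+\mathbb{R}^2_+$, the functional $\mu$ must be bounded below on $\mathbb{R}^2_+$, which forces $\mu\in\mathbb{R}^2_+$; normalize so that $\mu^L+\mu^U=1$. Taking the supremum over $b\in B$ gives $\alpha\geq\mu^L(f^L(x^*)-\epsilon^U)+\mu^U(f^U(x^*)-\epsilon^L)$, and specializing $a=F(x)$ for $x\in X$ yields exactly \eqref{equ-lm4.2-1}.

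The delicate point is the argument that the separating functional is nonnegative and the normalization is possible; this rests on the recession cone of $A$ containing $\mathbb{R}^2_+$, which keeps $\mu$ in the nonnegative orthant, and on $\mu\neq 0$ allowing the scaling $\mu^L+\mu^U=1$. Apart from this separation step, the remaining computations are direct.
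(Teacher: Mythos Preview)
Your argument is correct. The paper takes a slightly different route: after the same reduction via Lemma \ref{Lemma-2.2} to the inconsistency of the system $f^L(x)<f^L(x^*)-\epsilon^U$, $f^U(x)<f^U(x^*)-\epsilon^L$, $x\in X$, it invokes the Fan--Glicksburg--Hoffman alternative theorem \cite[Theorem 1]{Fan57} directly, which in one stroke delivers multipliers $\mu^L,\mu^U\geq 0$ with $\mu^L+\mu^U=1$ satisfying \eqref{equ-lm4.2-1}. Your proof instead unpacks that alternative theorem in this two-function case: you build the convex sets $A=F(X)+\mathbb{R}^2_+$ and $B=F(x^*)-\epsilon-\mathrm{int}\,\mathbb{R}^2_+$, separate them, and read off the multipliers by hand. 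The paper's approach is shorter and avoids verifying convexity of $A$ and the sign of $\mu$, while yours is more self-contained and does not rely on an external alternative theorem; the underlying mechanism (convex separation behind a Gordan-type alternative) is the same.
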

\begin{proof}
	By Lemma \ref{Lemma-2.2}, $x^*$ is a  weakly-$\mathcal{E}$-$LU$-solution
	of \eqref{problem} if and only if $x^*$ a weakly $\epsilon$-efficient solution \eqref{M-problem}, where $\epsilon:=(\epsilon^U, \epsilon^L)$. This is equivalent to the inconsistent of the following system
	\begin{equation*}
	\begin{cases}
	&f^L(x)< f^L(x^*)-\epsilon^U,
	\\
	&f^U(x)< f^U(x^*)-\epsilon^L,
	\\
	&x\in X.
	\end{cases}
	\end{equation*}
	By \cite[Theorem 1]{Fan57},   the above system is inconsistent if and only if   there exist $\mu^L\geq 0$, $\mu^U\geq 0$, $\mu^L+\mu^U=1$ such that 
	\begin{equation*}
	\mu^L[f^L(x)-f^L(x^*)+\epsilon^U]+\mu^U[f^U(x)-f^U(x^*)+\epsilon^L]\geq 0, \ \ \forall x\in X,
	\end{equation*} 
	or, equivalently, \eqref{equ-lm4.2-1}  is valid.  
\end{proof}

The following result gives KKT necessary and sufficient optimality conditions  for a feasible point to be a weakly-$\mathcal{E}$-$LU$-solution
of \eqref{problem}.
\begin{theorem}  Let $x^*\in X$. Assume that $X$ satisfies the Slater constraint qualification.  Then $x^*$ is a weakly-$\mathcal{E}$-$LU$-solution
	of \eqref{problem} if and only if there exist $\mu^L\geq 0$, $\mu^U\geq 0$, $\mu^L+\mu^U=1$, $\varepsilon_0\geq 0$,  $\varepsilon_j\geq 0$, and $\lambda_j\geq 0$, $j\in J$, such that
	\begin{equation*}
	0\in \partial_{\varepsilon_0}(\mu^Lf^L+\mu^Uf^U)(x^*)+ \sum_{j=1}^{m}\partial_{\varepsilon_j}(\lambda_jg_j)(x^*)\ \ \text{and}\ \ \sum_{j=0}^{m}\varepsilon_j-\mu^L\epsilon^U-\mu^U\epsilon^L\leq \sum_{j=1}^{m}\lambda_jg_j(x^*).
	\end{equation*}
\end{theorem}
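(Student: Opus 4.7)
The plan is to scalarize the problem via the preceding lemma and then apply the $\varepsilon$-KKT theorem for convex programming (Theorem \ref{Theorem-4.1}) to the scalarized objective. Since both results are stated as ``if and only if'', the whole proof becomes a chain of equivalences rather than two separate arguments, provided one checks that the scalar tolerance translates correctly.

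Concretely, I would first observe that by the preceding scalarization lemma, $x^*\in X$ is a weakly $\mathcal{E}$-$LU$-solution of \eqref{problem} if and only if there exist $\mu^L,\mu^U\geq 0$ with $\mu^L+\mu^U=1$ such that
\begin{equation*}
\varphi(x)\geq \varphi(x^*)-\varepsilon,\qquad\forall x\in X,
\end{equation*}
where $\varphi:=\mu^L f^L+\mu^U f^U$ and $\varepsilon:=\mu^L\epsilon^U+\mu^U\epsilon^L\geq 0$. Since $f^L$ and $f^U$ are convex and $\mu^L,\mu^U\geq 0$, the function $\varphi$ is convex and real-valued on $\mathbb{R}^n$, so the hypotheses of Theorem \ref{Theorem-4.1} are met (the Slater condition for $X$ is assumed). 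The displayed inequality then says precisely that $x^*$ is an $\varepsilon$-solution of $\varphi$ on $X$.

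For the forward (necessary) direction, the scalarization lemma supplies the weights $\mu^L,\mu^U$, and Theorem \ref{Theorem-4.1} applied to $\varphi$ with tolerance $\varepsilon$ yields $\varepsilon_0\geq 0$, $\varepsilon_j\geq 0$ and $\lambda_j\geq 0$ for $j\in J$ with
\begin{equation*}
0\in\partial_{\varepsilon_0}\varphi(x^*)+\sum_{j=1}^{m}\partial_{\varepsilon_j}(\lambda_j g_j)(x^*)\quad\text{and}\quad\sum_{j=0}^{m}\varepsilon_j-\varepsilon\leq\sum_{j=1}^{m}\lambda_j g_j(x^*),
\end{equation*}
which is exactly the conclusion once $\varepsilon$ is written as $\mu^L\epsilon^U+\mu^U\epsilon^L$. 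For the reverse (sufficient) direction, given the stated KKT data, the ``if'' part of Theorem \ref{Theorem-4.1} immediately implies $\varphi(x)\geq\varphi(x^*)-(\mu^L\epsilon^U+\mu^U\epsilon^L)$ for all $x\in X$; by the scalarization lemma, $x^*$ is a weakly $\mathcal{E}$-$LU$-solution of \eqref{problem}.

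The main things to be careful about are essentially bookkeeping rather than substantive obstacles: (i) verifying that the tolerance $\varepsilon=\mu^L\epsilon^U+\mu^U\epsilon^L$ is exactly what appears on the right-hand side of the KKT inequality (it is, since $\sum_{j=0}^m\varepsilon_j-\varepsilon=\sum_{j=0}^m\varepsilon_j-\mu^L\epsilon^U-\mu^U\epsilon^L$); (ii) noting that $\varphi$, being a nonnegative combination of two real-valued convex functions on $\mathbb{R}^n$, has full domain, so the subdifferential sum rule used implicitly in Theorem \ref{Theorem-4.1} is available without further qualification; and (iii) keeping the equivalences synchronized so that the existence of a single pair $(\mu^L,\mu^U)$ (rather than universal quantification) is preserved on both sides. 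No deeper estimate is required beyond these identifications.
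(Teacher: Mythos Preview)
Your proposal is correct and follows exactly the route the paper takes: scalarize via the lemma immediately preceding the theorem to obtain an $\varepsilon$-minimization of $\varphi=\mu^Lf^L+\mu^Uf^U$ on $X$ with $\varepsilon=\mu^L\epsilon^U+\mu^U\epsilon^L$, and then invoke Theorem~\ref{Theorem-4.1} in both directions. The paper's own proof is a one-line remark that the result is direct from these two ingredients, so your bookkeeping checks (i)--(iii) are precisely the content that the paper leaves implicit.
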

\begin{proof}  
	The proof is directly from Lemma \ref{Lemma-2.2} and Theorem \ref{Theorem-4.1}, so omitted.
\end{proof}

\subsection{KKT conditions for $\mathcal{E}$-$LU$-solutions}
For each $x^*\in X$, denote  
\begin{equation*}
X(x^*, \mathcal{E}):=\{x\in \mathbb{R}^n\,:\, f(x)\preceq_{LU} f(x^*)-\mathcal{E}\}.
\end{equation*}
\begin{lemma}\label{Necessary-efficient} Let $x^*\in X$. Then $x^*$ is an $\mathcal{E}$-$LU$-solution
	of \eqref{problem} if and only if $X\cap X(x^*,\mathcal{E})=\emptyset$ or,  
	\begin{equation}\label{equ-43}
	f^L(x)+f^U(x)=  f^L(x^*)+ f^U(x^*)- \epsilon^U- \epsilon^L, \ \ \forall x\in X\cap X(x^*,\mathcal{E}).
	\end{equation}
\end{lemma}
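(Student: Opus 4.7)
The plan is a direct unpacking of definitions combined with one small algebraic observation. Recall that $\prec_{LU}$ means $\preceq_{LU}$ together with $\ne$, so by definition $x^*$ is an $\mathcal{E}$-$LU$-solution of \eqref{problem} iff there is no $x \in X$ satisfying both $f(x) \preceq_{LU} f(x^*) - \mathcal{E}$ and $f(x) \ne f(x^*) - \mathcal{E}$. In the language of $X(x^*,\mathcal{E})$, this is equivalent to saying: for every $x \in X \cap X(x^*,\mathcal{E})$, one has $f(x) = f(x^*) - \mathcal{E}$. So the entire statement reduces to showing that, on the set $X\cap X(x^*,\mathcal{E})$, the sum identity \eqref{equ-43} is equivalent to the componentwise equality $f(x)=f(x^*)-\mathcal{E}$.

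The key observation is the following. From the interval subtraction rule, $f(x^*)-\mathcal{E}=[f^L(x^*)-\epsilon^U,\ f^U(x^*)-\epsilon^L]$. Hence for any $x \in X(x^*,\mathcal{E})$ the two componentwise inequalities
\begin{equation*}
f^L(x)\le f^L(x^*)-\epsilon^U, \qquad f^U(x)\le f^U(x^*)-\epsilon^L
\end{equation*}
hold simultaneously. Adding these yields $f^L(x)+f^U(x)\le f^L(x^*)+f^U(x^*)-\epsilon^U-\epsilon^L$, with equality iff both original inequalities are equalities, i.e., iff $f(x)=f(x^*)-\mathcal{E}$.

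With this equivalence in hand, I would give both directions in a couple of lines. For necessity, assume $x^*$ is an $\mathcal{E}$-$LU$-solution. If $X\cap X(x^*,\mathcal{E})$ is empty we are done; otherwise, for any $x$ in this intersection, the definition forbids $f(x)\prec_{LU} f(x^*)-\mathcal{E}$, so we must have $f(x)=f(x^*)-\mathcal{E}$, and the sum equality \eqref{equ-43} follows. For sufficiency, suppose the stated alternative holds. If $X\cap X(x^*,\mathcal{E})=\emptyset$ then no $x\in X$ even satisfies $f(x)\preceq_{LU}f(x^*)-\mathcal{E}$, so certainly none can satisfy the strict version. If instead \eqref{equ-43} holds throughout $X\cap X(x^*,\mathcal{E})$, then by the observation above every such $x$ satisfies $f(x)=f(x^*)-\mathcal{E}$, so $f(x)\prec_{LU}f(x^*)-\mathcal{E}$ fails on the whole of $X$, making $x^*$ an $\mathcal{E}$-$LU$-solution.

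There is no real technical obstacle here; the only step requiring care is the implicit use of the interval-arithmetic convention to expand $f(x^*)-\mathcal{E}$ correctly (with $\epsilon^U$ subtracted on the left endpoint and $\epsilon^L$ on the right), and then noticing that \eqref{equ-43} acts as a saturation condition that, under the background inequalities, pins both components to equality. Everything else is an unwinding of the $\prec_{LU}$/$\preceq_{LU}$ distinction.
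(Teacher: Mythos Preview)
Your proof is correct and follows essentially the same approach as the paper's. The only cosmetic difference is that the paper routes the forward direction through Lemma~\ref{Lemma-2.1} to rephrase $\mathcal{E}$-$LU$-optimality as $\epsilon$-efficiency of \eqref{M-problem}, whereas you unpack the definition of $\prec_{LU}$ directly; in both cases the core step is the observation that on $X\cap X(x^*,\mathcal{E})$ the two componentwise inequalities sum to \eqref{equ-43} with equality iff both are equalities.
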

\begin{proof} We will follow the proof scheme of \cite[Proposition 8.1]{Lee-Kim-Dinh-12} (see also \cite[Proposition 3.1]{Loridan-84}).
	
	$(\Rightarrow)$: Let $x^*$ be an  $\mathcal{E}$-$LU$-solution
	of \eqref{problem}. Then, by Lemma \ref{Lemma-2.1},  $x^*$ is an $\epsilon$-efficient solution of \eqref{M-problem}, where $\epsilon=(\epsilon^U, \epsilon^L)$. This means that  there is no $x\in X$ such that
	\begin{equation*}
	\begin{cases}
	f^L(x)&\leq f^L(x^*)-\epsilon^U
	\\
	f^U(x)&\leq f^U(x^*)-\epsilon^L
	\end{cases}
	\end{equation*} 
	with at least one strict inequality. Hence, $X(x^*, \mathcal{E})=\emptyset$ or,
	\begin{equation*}
	\begin{cases}
	f^L(x)&= f^L(x^*)-\epsilon^U
	\\
	f^U(x)&= f^U(x^*)-\epsilon^L
	\end{cases}
	\end{equation*}
	for all $x\in X\cap X(x^*,\mathcal{E})$ and we therefore get \eqref{equ-43}.  
	
	$(\Leftarrow)$: Clearly, if $X\cap X(x^*,\mathcal{E})=\emptyset$, then $x^*$ is an $\mathcal{E}$-$LU$-solution of \eqref{problem}. We now assume that \mbox{$X\cap X(x^*,\mathcal{E})\neq\emptyset$} and \eqref{equ-43} holds. Suppose to the contrary that $x^*$ is not an $\mathcal{E}$-$LU$-solution of \eqref{problem}. Then, there exists $x\in X$ such that
	\begin{equation*}
	\begin{cases}
	f^L(x)&\leq f^L(x^*)-\epsilon^U
	\\
	f^U(x)&\leq f^U(x^*)-\epsilon^L
	\end{cases}
	\end{equation*} 
	with at least one strict inequality. Hence, $x\in X\cap X(x^*, \mathcal{E})$ and  
	$$f^L(x)+f^U(x)<  f^L(x^*)+ f^U(x^*)- \epsilon^U- \epsilon^L,$$
	contrary to \eqref{equ-43}.   
\end{proof}

We say the {\em  closedness condition } $(CC_{x^*})$ holds at $x^*\in X$ if 
\begin{equation*}
\mathrm{cone}\,\Big(\bigcup_{j\in J} \mathrm{epi}\,g_j^*\cup \mathrm{epi}\, (\tilde{f^L})^* \cup \mathrm{epi}\, (\tilde{f^U})^*\Big)\ \ \text{is closed},
\end{equation*}  
where $\tilde{f^L}(x):=f^L(x)-f^L(x^*)+\epsilon^U$ and $\tilde{f^U}(x):=f^U(x)-f^U(x^*)+\epsilon^L$. 

By using Lemma \ref{Necessary-efficient} and  modifying  the proof of Theorem 8.1 in \cite{Lee-Kim-Dinh-12}, we can obtain the following result.
\begin{theorem}  Let $x^*\in X$. Assume that $X\cap X(x^*,\mathcal{E})\neq\emptyset$ and that $(CC_{x^*})$ holds.   Then $x^*$ is an $\mathcal{E}$-$LU$-solution
	of \eqref{problem} if and only if there exist  $\varepsilon_0\geq 0$, $\gamma_1\geq 0$, $\gamma_2\geq 0$,   $\varepsilon_j\geq 0$, $\mu_1\geq 0$, $\mu_2\geq 0$, and $\lambda_j\geq 0$, $j\in J$, such that
	\begin{align*}
	&0\in \partial_{\varepsilon_0}(f^L+f^U)(x^*)+\mu_1\partial_{\gamma_1} f^L(x^*)+\mu_2\partial_{\gamma_2} f^U(x^*)+ \sum_{j=1}^{m}\partial_{\varepsilon_j}(\lambda_jg_j)(x^*)
	\\ 
	&\varepsilon_0+\mu_1\gamma_1+\mu_2\gamma_2-(1+\mu_1)\epsilon^U-(1+\mu_2)\epsilon^L+\sum_{j=1}^{m}\lambda_j\varepsilon_j\leq \sum_{j=1}^{m}\lambda_jg_j(x^*).
	\end{align*}
\end{theorem}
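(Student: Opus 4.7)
The plan is to follow the hint in the text: use Lemma~\ref{Necessary-efficient} to reduce the claim to the inconsistency of a single convex system, and then adapt the generalized Farkas/conjugate argument of Theorem~8.1 in \cite{Lee-Kim-Dinh-12} under the closedness condition $(CC_{x^*})$. Because $X\cap X(x^*,\mathcal{E})\neq\emptyset$, Lemma~\ref{Necessary-efficient} says that $x^*$ is an $\mathcal{E}$-$LU$-solution iff the identity \eqref{equ-43} holds throughout $X\cap X(x^*,\mathcal{E})$. Since $\tilde{f^L}(x)\le 0$ and $\tilde{f^U}(x)\le 0$ already force $(f^L+f^U)(x)-(f^L+f^U)(x^*)+\epsilon^U+\epsilon^L\le 0$, this identity is in turn equivalent to the inconsistency in $\mathbb{R}^n$ of the convex system
\begin{equation*}
\tilde{f^L}(x)\le 0,\quad \tilde{f^U}(x)\le 0,\quad g_j(x)\le 0\ (j\in J),\quad (f^L+f^U)(x)-(f^L+f^U)(x^*)+\epsilon^U+\epsilon^L<0.
\end{equation*}
Thus the problem is recast as asking when the scalar convex inequality $(f^L+f^U)(x)\ge (f^L+f^U)(x^*)-\epsilon^U-\epsilon^L$ is valid on the convex set cut out by the three families of non-strict inequalities.

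Next, I invoke a convex Farkas-type lemma. Under $(CC_{x^*})$ the cone $\mathrm{cone}\bigl(\bigcup_{j\in J}\mathrm{epi}\,g_j^*\cup \mathrm{epi}(\tilde{f^L})^*\cup \mathrm{epi}(\tilde{f^U})^*\bigr)$ is closed, so the epigraph form of the generalized Farkas lemma (exactly as exploited in \cite{Lee-Kim-Dinh-12}) produces non-negative multipliers $\mu_1,\mu_2,\lambda_1,\dots,\lambda_m\ge 0$ and elements $(v_0,\alpha_0)\in \mathrm{epi}(f^L+f^U)^*$, $(v_j,\alpha_j)\in \mathrm{epi}(\lambda_j g_j)^*$, $(u_1,\beta_1)\in \mathrm{epi}(\mu_1\tilde{f^L})^*$, $(u_2,\beta_2)\in \mathrm{epi}(\mu_2\tilde{f^U})^*$ whose coordinates sum respectively to $0$ and to $(f^L+f^U)(x^*)-\epsilon^U-\epsilon^L$. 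Each membership is then translated into an approximate-subdifferential membership via the standard equivalence
\begin{equation*}
v\in\partial_{\varepsilon}\psi(x^*)\ \Longleftrightarrow\ (v,\langle v,x^*\rangle-\psi(x^*)+\varepsilon)\in\mathrm{epi}\,\psi^*,
\end{equation*}
combined with the scaling rule $\mu\partial_{\varepsilon}\psi(x^*)=\partial_{\mu\varepsilon}(\mu\psi)(x^*)$ and the conjugate identities $(\tilde{f^L})^*(v)=(f^L)^*(v)+f^L(x^*)-\epsilon^U$ and $(\tilde{f^U})^*(v)=(f^U)^*(v)+f^U(x^*)-\epsilon^L$. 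Assembling these relations yields both the subdifferential inclusion and the accompanying numerical inequality in the theorem; the somewhat unusual coefficients $(1+\mu_1)\epsilon^U$ and $(1+\mu_2)\epsilon^L$ appear because one copy of $\epsilon^U$ (resp.\ $\epsilon^L$) comes from the shift hidden in $\tilde{f^L}$ (resp.\ $\tilde{f^U}$), while the other copy comes from the constant $\epsilon^U+\epsilon^L$ on the right-hand side of the convex inequality being tested.

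For the sufficiency direction, the argument is essentially a back-substitution: pick any $x\in X\cap X(x^*,\mathcal{E})$, apply the defining inequality $v\in\partial_{\varepsilon}\psi(x^*)\Rightarrow \psi(x)-\psi(x^*)\ge\langle v,x-x^*\rangle-\varepsilon$ to every $\varepsilon$-subgradient appearing in the KKT inclusion, sum, and use $g_j(x)\le 0$, $\tilde{f^L}(x)\le 0$, $\tilde{f^U}(x)\le 0$ together with the stated scalar inequality on the $\varepsilon$-parameters to extract $(f^L+f^U)(x)\ge (f^L+f^U)(x^*)-\epsilon^U-\epsilon^L$; combined with the reverse inequality automatic from $x\in X(x^*,\mathcal{E})$, this yields \eqref{equ-43}, and Lemma~\ref{Necessary-efficient} concludes. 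I expect the main obstacle to be not the overall strategy but the careful bookkeeping of the two constant shifts through the conjugate/epigraph calculus so that the unusual coefficients $(1+\mu_1)\epsilon^U$ and $(1+\mu_2)\epsilon^L$ come out correctly in the slack inequality; once $(CC_{x^*})$ is in hand, the Farkas step and the subdifferential translation are routine, mirroring Theorem~8.1 of \cite{Lee-Kim-Dinh-12}.
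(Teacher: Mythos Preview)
Your proposal is correct and follows essentially the same route the paper indicates: reduce via Lemma~\ref{Necessary-efficient} to the inconsistency of a single convex system, then run the conjugate/Farkas argument of \cite[Theorem~8.1]{Lee-Kim-Dinh-12} under $(CC_{x^*})$ and translate back into $\varepsilon$-subdifferentials. The paper in fact omits the details and merely refers to that argument, so your write-up is more explicit than the paper itself while remaining faithful to its intended method.
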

\begin{proof}
	The proof is  similar to that of the proof of \cite[Theorem 8.1]{Lee-Kim-Dinh-12}, and we omit it.
\end{proof}
\subsection{KKT conditions for $\mathcal{E}$-quasi-$LU$-solutions}
We say that the {\em Mangasarian--Fromovitz constraint
	qualification} $(MFCQ)$ holds at $x^*\in X$ if there do not exist $\lambda_j\geq 0$, $j\in J(x^*)$ not all zero, such that
\begin{equation*}
0\notin \sum_{j\in J(x^*)}\lambda_j\partial g_j(x^*),
\end{equation*} 
where $J(x^*):=\{j\in J\,:\, g_j(x^*)=0\}$.
\begin{theorem} 
	Let $x^*\in X$. Assume that the condition $(MFCQ)$ holds at $x^*$.  Then $x^*$ is a weakly $\mathcal{E}$-quasi-$LU$-solution of \eqref{problem} if and only if   there exist $\mu^L\geq 0$, $\mu^U\geq 0$, $\mu^L+\mu^U>0$, $\varepsilon_0\geq 0$,  $\varepsilon_j\geq 0$, and $\lambda_j\geq 0$, $j \in J$, such that 
	\begin{align} \label{sufficient-quasi}
	\begin{split}
	&0\in \mu^L\partial f^L(x^*)+\mu^U\partial f^U(x^*)+\sum_{j=1}^{m}\lambda_j\partial g_j(x^*)+(\mu^L\epsilon^U+\mu^U\epsilon^L)B_{\mathbb{R}^n}.
	\\
	&\lambda_jg_j(x^*)=0, \ \ \forall j\in J.
	\end{split}
	\end{align}
	Furthermore, if $f^L$ and $f^U$ are strictly convex, then \eqref{sufficient-quasi} is also sufficient for $x^*$ is an $\mathcal{E}$-quasi-$LU$-solution of \eqref{problem}. 
\end{theorem}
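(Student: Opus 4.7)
The plan is to reduce the problem, via Lemma \ref{Lemma-2.2}, to weak quasi-Pareto optimality of the vector pair $F=(f^L,f^U)$ on $X$ with tolerance $\epsilon=(\epsilon^U,\epsilon^L)$, and then absorb the quasi term into the objective. Precisely, define the (finite, continuous, convex) functions
\begin{equation*}
h^L(x):=f^L(x)+\epsilon^U\|x-x^*\|,\qquad h^U(x):=f^U(x)+\epsilon^L\|x-x^*\|.
\end{equation*}
Since $h^L(x^*)=f^L(x^*)$ and $h^U(x^*)=f^U(x^*)$, the inequalities defining weak $\epsilon$-quasi-efficiency of $x^*$ translate verbatim to the statement that $x^*$ is a weak Pareto minimizer of $(h^L,h^U)$ on the convex set $X$.

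For the necessity direction, I would apply a Fan-type scalarization of weak Pareto optimality for a pair of convex functions (the same tool used for the weakly $\mathcal{E}$-$LU$ case) to produce $\mu^L,\mu^U\geq 0$ with $\mu^L+\mu^U=1$ for which $x^*$ globally minimizes the convex scalar function $\mu^L h^L+\mu^U h^U$ on $X$. Because $(MFCQ)$ holds at $x^*$, the standard convex KKT theorem yields $\lambda_j\geq 0$ with $\lambda_j g_j(x^*)=0$ and
\begin{equation*}
0\in\partial(\mu^L h^L+\mu^U h^U)(x^*)+\sum_{j=1}^{m}\lambda_j\partial g_j(x^*).
\end{equation*}
The Moreau--Rockafellar sum rule applies because $f^L,f^U$ and $\|\cdot-x^*\|$ are all finite convex functions on $\mathbb{R}^n$, and since $\partial\|\cdot-x^*\|(x^*)=B_{\mathbb{R}^n}$, the first subdifferential above equals $\mu^L\partial f^L(x^*)+\mu^U\partial f^U(x^*)+(\mu^L\epsilon^U+\mu^U\epsilon^L)B_{\mathbb{R}^n}$, giving the claimed KKT system.

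For sufficiency under strict convexity of $f^L$ and $f^U$, I would select $\xi^L\in\partial f^L(x^*)$, $\xi^U\in\partial f^U(x^*)$, $\eta_j\in\partial g_j(x^*)$ and $b\in B_{\mathbb{R}^n}$ realizing the inclusion, and argue by contradiction. Suppose some $x\in X$ (necessarily $x\neq x^*$) satisfies $f^L(x)\leq f^L(x^*)-\epsilon^U\|x-x^*\|$ and $f^U(x)\leq f^U(x^*)-\epsilon^L\|x-x^*\|$ with at least one strict inequality. Because $\mu^L+\mu^U>0$, at least one weight is positive, and strict convexity upgrades the corresponding subgradient inequality to a strict one, so
\begin{equation*}
\mu^L\bigl(f^L(x)-f^L(x^*)\bigr)+\mu^U\bigl(f^U(x)-f^U(x^*)\bigr)>\langle\mu^L\xi^L+\mu^U\xi^U,\,x-x^*\rangle.
\end{equation*}
Substituting $\mu^L\xi^L+\mu^U\xi^U=-\sum_j\lambda_j\eta_j-(\mu^L\epsilon^U+\mu^U\epsilon^L)b$ and using complementary slackness together with $g_j(x)\leq 0$ and $\|b\|\leq 1$, the right-hand side is at least $-(\mu^L\epsilon^U+\mu^U\epsilon^L)\|x-x^*\|$, which contradicts the two componentwise inequalities after weighting by $\mu^L$ and $\mu^U$. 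The main obstacle I anticipate is the necessity step: one has to justify the scalarization carefully (which is exactly why $\mu^L+\mu^U$ is only required to be positive rather than both weights positive) and to verify that $(MFCQ)$ for the original constraint system remains a valid qualification for the scalarized convex program. The remaining pieces (subdifferential of the norm at its reference point, sum rule, and complementary slackness) are routine convex calculus.
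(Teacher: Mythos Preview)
Your necessity argument is correct but proceeds along a different route than the paper. The paper does not first scalarize and then apply KKT; instead it forms the single convex function
\[
\Phi(x)=\max\{\,f^L(x)-f^L(x^*)+\epsilon^U\|x-x^*\|,\;f^U(x)-f^U(x^*)+\epsilon^L\|x-x^*\|,\;g_1(x),\ldots,g_m(x)\,\},
\]
observes that $\Phi(x^*)=0\leq\Phi(x)$ for all $x\in\mathbb{R}^n$, and reads off all the multipliers at once from the max-rule description of $\partial\Phi(x^*)$, using $(MFCQ)$ only to rule out $\mu^L=\mu^U=0$. Your two-step approach (Fan scalarization on $X$, then scalar KKT under $(MFCQ)$) is equally valid and arguably more transparent about where the weights $\mu^L,\mu^U$ come from; the paper's approach is more compact and avoids invoking a separate KKT theorem for the scalarized problem.

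There is, however, a genuine omission in your plan: the theorem is an ``if and only if'' for the \emph{weak} $\mathcal{E}$-quasi-$LU$ notion, and you have only argued the forward implication. Your ``sufficiency'' paragraph jumps directly to the strictly convex ``Furthermore'' clause (the non-weak conclusion), and never establishes that \eqref{sufficient-quasi} already forces $x^*$ to be a weakly $\mathcal{E}$-quasi-$LU$-solution without any strict convexity hypothesis. The missing argument is easy and is exactly what the paper does: assume both inequalities $f^L(\hat x)<f^L(x^*)-\epsilon^U\|\hat x-x^*\|$ and $f^U(\hat x)<f^U(x^*)-\epsilon^L\|\hat x-x^*\|$ are \emph{strict}, so that ordinary convexity (not strict) of $f^L,f^U,g_j$ and Cauchy--Schwarz give the same chain of inequalities you wrote, now contradicting the strict upper bound coming from $\mu^L+\mu^U>0$. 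Only after that does strict convexity enter, to upgrade the conclusion from ``weakly'' to the full $\mathcal{E}$-quasi-$LU$-solution. Apart from this missing middle step, your strict-convexity contradiction argument matches the paper's.
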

\begin{proof}
	$(\Rightarrow)$: Assume that $x^*$ is a weakly $\mathcal{E}$-quasi-$LU$-solution of \eqref{problem}. Then, by Lemma \ref{Lemma-2.2}, $x^*$ is a weakly $\epsilon$-quasi-efficient solution of \eqref{M-problem}, where $\epsilon=(\epsilon^U, \epsilon^L)$. Hence, the following system:
	\begin{equation*}
	\begin{cases}
	f^L(x)<f^L(x^*)-\epsilon^U\|x-x^*\|,
	\\
	f^U(x)<f^U(x^*)-\epsilon^L\|x-x^*\|,
	\end{cases}
	\end{equation*}
	has no solution $x\in X$. For each $x\in \mathbb{R}^n$, put
	\begin{equation*}
	\Phi(x):=\max\{f^L(x)-f^L(x^*)+\epsilon^U\|x-x^*\|, f^U(x)-f^U(x^*)+\epsilon^L\|x-x^*\|, g_1(x),  \ldots, g_m(x)\}.
	\end{equation*}  
	Then $\Phi(x^*)=0$ and  $\Phi(x)\geq 0$ for all $x\in \mathbb{R}^n$. Clearly, $\Phi$ is convex. Hence, by \cite[Theorem 2.89]{Draha-Dutta-12}, we have $$0\in\partial\Phi(x^*).$$ 
	From this and \cite[Theorem 2.96]{Draha-Dutta-12} it follows that there exist $\mu^L\geq 0$, $\mu^U\geq 0$, $\lambda_j\geq 0$, $j\in J(x^*)$ such that $\mu^L+\mu^U+\sum_{j\in J(x^*)}\lambda_j=1$ and
	\small{
		\begin{align*}
		0\in \mu^L\partial(f^L(\cdot)-f^L(x^*)+\epsilon^U\|\cdot\,-x^*\|)(x^*) +\mu^U\partial(f^U(\cdot)-f^U(x^*)+\epsilon^L\|\cdot\,-x^*\|)(x^*)
		+\sum_{j\in J(x^*)}\lambda_j\partial g_j(x^*).
		\end{align*}}
	Combining this with the Moreau--Rockafellar Sum Rule \cite[Theorem 2.91]{Draha-Dutta-12} and the fact that  $$\partial(\|\cdot\,-x^*\|)(x^*)=B_{\mathbb{R}^n},$$ 
	we obtain
	$$0\in \mu^L\partial f^L(x^*)+\mu^U\partial f^U(x^*)+\sum_{j\in J(x^*)}\lambda_j\partial g_j(x^*)+(\mu^L\epsilon^U+\mu^U\epsilon^L)B_{\mathbb{R}^n}.$$
	Clearly, the condition $(MFCQ)$ implies that $\mu^L+\mu^U>0$. For $j\notin J(x^*)$, we put $\lambda_j=0$. Then, \eqref{sufficient-quasi} holds. 
	
	$(\Leftarrow)$: Assume that there exist $\mu^L\geq 0$, $\mu^U\geq 0$, $\mu^L+\mu^U>0$, $\varepsilon_0\geq 0$,  $\varepsilon_j\geq 0$, and $\lambda_j\geq 0$, $j\in J$, that satisfy \eqref{sufficient-quasi}.  Hence, there exist $z^L\in\partial f^L(x^*)$,  $z^U\in\partial f^U(x^*)$, $u_j\in \partial g_j(x^*)$ and $b\in B_{\mathbb{R}^n}$ such that
	\begin{equation*}
	\mu^L z^L+\mu^U z^U+\sum_{j=1}^{m}\lambda_j u_j+(\mu^L\epsilon^U+\mu^U\epsilon^L)b=0,
	\end{equation*}
	or, equivalently,
	\begin{equation}\label{equ-4.5}
	\mu^L (z^L+\epsilon^Ub)+\mu^U  (z^U+\epsilon^Lb)+\sum_{j=1}^{m}\lambda_j u_j=0.
	\end{equation}
	Suppose to the contrary that $x^*$ is not a weakly $\mathcal{E}$-quasi-$LU$-solution of \eqref{problem}. This implies that there exists $\hat{x}\in X$ such that
	\begin{equation*}
	\begin{cases}
	f^L(\hat{x})<f^L(x^*)-\epsilon^U\|\hat{x}-x^*\|,
	\\
	f^U(\hat{x})<f^U(x^*)-\epsilon^L\|\hat{x}-x^*\|.
	\end{cases}
	\end{equation*}
	Hence,
	\begin{equation}\label{equ-4.4}
	\mu^L (f^L(\hat{x})+\epsilon^U\|\hat{x}-x^*\|-f^L(x^*))+\mu^U (f^U(\hat{x})+\epsilon^L\|\hat{x}-x^*\|-f^U(x^*))<0, 
	\end{equation}
	due to $\mu^L+\mu^U>0$. Since the function  $f^L(\cdot)+\epsilon^U\|\cdot\,-x^*\|$     is convex, we have
	\begin{align*}
	f^L(\hat{x})+\epsilon^U\|\hat{x}-x^*\|- f^L(x^*)\geq z^*(\hat{x}-x^*), \ \ \forall z^*\in \partial( f^L(\cdot)+\epsilon^U\|\cdot\,-x^*\|)(x^*). 
	\end{align*}
	This and the fact that 
	$$\partial( f^L(\cdot)+\epsilon^U\|\cdot\,-x^*\|)(x^*)= \partial  f^L(x^*)+\epsilon^U B_{\mathbb{R}^n}$$ 
	imply that
	\begin{equation*}
	f^L(\hat{x})+\epsilon^U\|\hat{x}-x^*\|- f^L(x^*)\geq (z^L+\epsilon^Ub)(\hat{x}-x^*).
	\end{equation*}
	Similarly, we have
	\begin{align*}
	f^U(\hat{x})+\epsilon^L\|\hat{x}-x^*\|- f^U(x^*)&\geq (z^U+\epsilon^Lb)(\hat{x}-x^*)
	\\
	g_j(\hat{x})-g_j(x^*)&\geq u_j(\hat{x}-x^*), \ \ \forall j\in J. 
	\end{align*}
	From these and \eqref{equ-4.4} we deduce that
	\begin{align*}
	\Big[\mu^L (z^L+\epsilon^Ub)+\mu^U  (z^U+\epsilon^Lb)+\sum_{j=1}^{m}\lambda_j u_j\Big](\hat{x}-x^*)
	\leq \mu^L (f^L(\hat{x})&+\epsilon^U\|\hat{x}-x^*\|-f^L(x^*))
	\\
	&+\mu^U (f^U(\hat{x})+\epsilon^L\|\hat{x}-x^*\|-f^U(x^*))<0,
	\end{align*}
	contrary to \eqref{equ-4.5}.
	
	If $f^L$ and $f^U$ are strictly convex, then  so are $f^L(\cdot)+\epsilon^U\|\cdot\,-x^*\|$ and $f^U(\cdot)+\epsilon^L\|\cdot\,-x^*\|$. Now suppose to the contrary that $x^*$ is not an $\mathcal{E}$-quasi-$LU$-solution of \eqref{problem}. Then there exists $\tilde{x}\in X$ such that
	\begin{equation*}
	\begin{cases}
	f^L(\tilde{x})\leq f^L(x^*)-\epsilon^U\|\tilde{x}-x^*\|,
	\\
	f^U(\tilde{x})\leq f^U(x^*)-\epsilon^L\|\tilde{x}-x^*\|,
	\end{cases}
	\end{equation*}
	with at least one strict inequality. Without loss of generality we assume that 
	\begin{equation*}
	f^L(\tilde{x})< f^L(x^*)-\epsilon^U\|\tilde{x}-x^*\|.
	\end{equation*}
	This imply that $\tilde{x}\neq x^*$. By the strictly convexity of $f^L$ and $f^U$ we obtain
	\begin{align*}
	f^L(\tilde{x})+\epsilon^U\|\tilde{x}-x^*\|- f^L(x^*)&> (z^L+\epsilon^Ub)(\tilde{x}-x^*),
	\\
	f^U(\tilde{x})+\epsilon^L\|\tilde{x}-x^*\|- f^U(x^*)&> (z^U+\epsilon^Lb)(\tilde{x}-x^*).
	\end{align*}
	Hence,
	\begin{align*}
	0=\Big[\mu^L (z^L+\epsilon^Ub)+\mu^U  (z^U+\epsilon^Lb)+\sum_{j=1}^{m}\lambda_j u_j\Big](\tilde{x}-x^*)
	< \mu^L (f^L&(\tilde{x})+\epsilon^U\|\tilde{x}-x^*\|-f^L(x^*))
	\\
	&+\mu^U (f^U(\tilde{x})+\epsilon^L\|\tilde{x}-x^*\|-f^U(x^*))\leq 0,
	\end{align*}
	a contradiction. The proof is complete.
\end{proof}
\section{Conclusions}\label{Conclusions} In this work, we propose some kinds of approximate solutions of interval optimization problems with respect to $LU$ interval order relation. We show that these approximate solutions exist under very weak assumptions. By establishing the relationships of approximate solutions  between interval optimization problems and multiobjective optimization problems and using suitable constraint qualifications, we derive some KKT necessary and sufficient  optimality conditions for approximate solutions of convex interval-valued optimization problems.  As shown in Lemmas \ref{Lemma-2.1} and \ref{Lemma-2.2},  approximate solutions of interval optimization problems are closed related to the approximate efficient ones of  multiobjective optimization problems. Accordingly, we may use the schemes in \cite{Chuong-Kim-16,Son-Tuyen-Wen-19,Tammer} to  present new results on KKT necessary and sufficient  optimality conditions by virtue of the Clarke subdifferentials (or the limiting  subdifferentials) for nonconvex and nonsmooth interval optimization problems in our further research.

\section*{Acknowledgments}{}
This research is funded by Hanoi Pedagogical University 2. 



\begin{thebibliography}{99}
	
	
	\bibitem{Alefeld}
	\uppercase{  Alefeld, G. and Herzberger, J.} (1983): {\bf Introduction to Interval Computations}. Academic Press, New-York. 
	
	\bibitem{araya} 
	\uppercase{  Araya, Y.}  (2008): 
	{ Ekeland's variational principle and its equivalent theorems in vector optimization.} {\bf  	 J. Math. Anal. Appl.}, { 346}, 9--16.
	
	\bibitem{Ben-Tal-Nemirovski-98}
	\uppercase{ Ben-Tal, A. and Nemirovski, A.} (1998): { Robust convex optimization}. {\bf  Math. Oper. Res.}, {23}, 769--805.
	
	\bibitem{Ben-Tal-Nemirovski-08}
	\uppercase{ Ben-Tal, A. and Nemirovski, A.}  (2008): { A selected topics in robust convex optimization}. {\bf Math. Program.},  { 112}, 125--158.
	
	\bibitem{Ben-Tal-Nemirovski-09}
	\uppercase{ Ben-Tal, A., Ghaoui, L. E.,  and Nemirovski, A.} (2009):
	{\bf  Robust Optimization}. Princeton and Oxford: Princeton University Press, Princeton, New Jersey.
	
	\bibitem{Bao-et al}
	\uppercase{  Bao, T. Q., Eichfelder, G., Soleimani, B., and Tammer, C.}  (2017): Ekeland's variational principle for vector optimization with variable ordering structure. {\bf  J. Convex Anal.}, 24, 393--415.
	
	
	
	\bibitem{Chalco-Cano et.al.-13}
	\uppercase{ Chalco-Cano, Y.,  Lodwick, W. A., and  Rufian-Lizana,  A.} (2013): { Optimality conditions of type KKT for optimization problem with interval-valued objective function via generalized derivative}. {\bf Fuzzy Optim. Decis. Mak.,} { 12}, 305--322.
	
	
	\bibitem{Chuong-Kim-16}
	\uppercase{ Chuong, T. D. and  Kim, D. S.}  (2016): { Approximate solutions of multiobjective optimization problems}. {\bf Positivity}, { 20}, 187--207. 
	
	\bibitem{Draha-Dutta-12}
	\uppercase{  Draha, A. and  Dutta,  J.} (2012): {\bf Optimality Conditions in Convex Optimization: A Finite Dimensional View}. CRC Press Taylor Francis Group.
	
	
	
	\bibitem{Fan57}
	\uppercase{  Fan, K.,  Glicksburg,  I., and  Hoffman, A. J.}  (1957): {  Systems of inequalities  involving convex functions}. {\bf  Amer. Math.
		Soc. Proc.,} { 8}, 617--622.
	
	\bibitem{Ishibuchi-Tanaka-90}
	\uppercase{  Ishibuchi, H.  and  Tanaka, H.}  (1990):  { Multiobjective programming in optimization of the interval objective function}. {\bf European J. Oper. Res.}, {  48}, 219--225.  
	
	
	\bibitem{Lee-Kim-Dinh-12}
	\uppercase{ Lee, G. M.,  Kim,  G. S., and  Dinh,  N.} (2012): { Optimality conditions for approximate solutions of convex semi-infinite vector optimization problems}. En: \uppercase{ Ansari, Q. H. and  Yao, J. -C.} {\bf   Recent Developments in Vector Optimization, Vector Optimization}, vol. 1, pp. 275--295.   Springer, Berlin.
	
	\bibitem{Loridan-84}
	\uppercase{ Loridan, P.} (1984):  { $\epsilon$-solutions in vector minimization problems}. {\bf J. Optim. Theory Appl.}, { 43}, 265--276. 
	
	\bibitem{Moore-1966}
	\uppercase{ Moore, R. E.} (1966):  {\bf Interval Analysis}. Prentice-Hall, Englewood Cliffs, New Jersey.
	
	\bibitem{Moore-1979}
	\uppercase{ Moore, R. E.} (1979): {\bf Method and Applications of Interval Analysis}. SIAM, Philadelphia.
	
	
	
	\bibitem{Rockafellar-70}
	\uppercase{Rockafellar,   R. T. }  (1970): {\bf Convex Analysis}. Princeton University Press, Princeton.
	
	
	\bibitem{Singh-Dar-15}
	\uppercase{ Singh, A. D. and  Dar,   B. A.} (2015): { Optimality conditions in multiobjective programming problems with interval valued objective functions}. {\bf  Control Cybern.},  {\bf44}, 19--45.  
	
	\bibitem{Singh-Dar-Kim-16}
	\uppercase{  Singh, D.,  Dar,  B. A., and  Kim,  D. S.}  (2016): {KKT optimality conditions in interval valued multiobjective programming with generalized differentiable functions}. {\bf European J. Oper. Res.}, {254}, 29--39.
	
	\bibitem{Singh-Dar-Kim-19}
	\uppercase{  Singh, D.,  Dar,  B. A., and  Kim,  D. S.}  (2019): {  Sufficiency and duality in non-smooth interval valued programming problems}. {\bf J. Ind. Manag. Optim.}, {\bf 15}, 647--665.
	
	\bibitem{Slowinski-98}
	\uppercase{  S{\L}owi\'nski, R.}  (1998): {\bf Fuzzy Sets in Decision Analysis, Operations Research and Statistics}. Kluwer Academic Publishers, Boston.
	
	
	\bibitem{Son-Tuyen-Wen-19}
	\uppercase{ Son, T. Q.,   Tuyen,  N. V., and  Wen,  C. F.} (2020): {  Optimality conditions for approximate Pareto solutions of a nonsmooth vector optimization problem with an infinite number of constraints}. {\bf   Acta Math. Vietnam.}, {\bf 45}, 435--448.  
	
	\bibitem{Tammer}
	\uppercase{ Tammer, C.} (1993): Existence results and necessary conditions for $\epsilon$-efficient elements, En: \uppercase{  Brosowski, B.,  Ester,  J.,  Helbig,  S., and  Nehse, R.}  {\bf Multicriteria Decision Frankfurt am Main},  pp. 97--109. Verlag P. Lang.
	
	\bibitem{Tammer-Zalinescu}
	\uppercase{    Tammer, C. and  Z\u{a}linescu, C.} (2012): Vector variational principles for set-valued functions. En: \uppercase{ Ansari, Q. H. and  Yao, J. -C.} {\bf   Recent Developments in Vector Optimization, Vector Optimization}, vol. 1, pp. 367--415.   Springer, Berlin. 
	
	
	\bibitem{Tung-2019}
	\uppercase{  Tung, L. T.}  (2019): { Karush--Kuhn--Tucker optimality conditions and duality for semi-infinite programming with multiple interval-valued objective functions}. {\bf J. Nonlinear Funct. Anal.}, {2019},  21 pages.
	
	\bibitem{Tung-2019b}
	\uppercase{  Tung, L. T.}  (2020): { Karush--Kuhn--Tucker optimality conditions and duality for convex semi-infinite programming with multiple interval-valued objective functions}. {\bf J. Appl. Math. Comput.}, { 62}, 67--91.
	
	\bibitem{TXS-20}
	\uppercase{ Tuyen, N. V.,  Xiao,  Y. -B., and  Son,  T. Q.}  (2020): {On approximate KKT optimality conditions for cone-constrained vector optimization problems}. {\bf  J. Nonlinear Convex Anal.}, {21},  105--117.
	
	\bibitem{Wu-07}
	\uppercase{ Wu, H. C.}  (2007):  { The Karush--Kuhn--Tuker optimality conditions in an optimization problem with interval valued objective functions}. {\bf Eur. J. Oper. Res.}, { 176}, 46--59.
	
	\bibitem{Wu-09}
	\uppercase{ Wu, H. C.}  (2009):  {  The Karush--Kuhn--Tucker optimality conditions in multiobjective programming problems with interval-valued objective functions}. {\bf Eur. J. Oper. Res.}, { 196}, 49–60.
	
	\bibitem{Wu-09-b}
	\uppercase{ Wu, H. C.}  (2009):  { The Karush--Kuhn--Tucker optimality conditions for multi-objective programming problems with fuzzy-valued objective functions}. {\bf Fuzzy Optim. Decis. Mak.}, { 8}, 1--28. 
	
\end{thebibliography}
\end{document}